\newtheorem{theorem}{Theorem}[section]
\newtheorem{proposition}[theorem]{Proposition}
\newtheorem{remark}[theorem]{Remark}
\numberwithin{equation}{section}
\newcommand{\pa}{\partial}
\newcommand{\di}{\displaystyle}
\begin{document}
\title[ Ruelle zeta function]{ Ruelle zeta function for cofinite hyperbolic Riemann surfaces with ramification points}
\author{Lee-Peng Teo}

\address{Department of  Mathematics and Applied Mathematics, Xiamen University Malaysia, Jalan Sunsuria, Bandar Sunsuria, 43900, Sepang, Selangor, Malaysia. }

\email{lpteo@xmu.edu.my}

\subjclass[2000]{Primary 11F72, 37C30, 11M36}

\date{\today}

\keywords{ Ruelle zeta function, Determinant of Laplacian, Selberg zeta function, Hyperbolic surfaces, Cofinite Fuchsian groups}

\begin{abstract}
We consider the Ruelle zeta function $R(s)$ of a genus $g$   hyperbolic Riemann surface with $n$ punctures and $v$ ramification points.   $R(s)$ is equal to $Z(s)/Z(s+1)$, where $Z(s)$ is the Selberg zeta function. The main result of this work is the leading behavior of $R(s)$ at $s=0$. If $n_0$ is the order of the determinant of the scattering matrix $\varphi(s)$ at $s=0$, we find that
\begin{align*}
\lim_{s\rightarrow 0}\frac{R(s)}{s^{2g-2+n-n_0}}=(-1)^{\frac{A}{2}+1}(2\pi)^{2g-2+n }\tilde{\varphi}(0)^{-1} \prod_{j=1}^v m_j,
\end{align*}which says that $R(s)$ has order $2g-2+n-n_0$ at $s=0$, and its leading coefficient can be expressed in terms of $m_1$, $m_2$, $\ldots$, $m_v$,  the ramification indices at the ramification points, and $\tilde{\varphi}(0)$, the leading coefficient of $\varphi(s)$ at $s=0$. The constant $A$ is an even integer, equal to twice the multiplicity of the eigenvalue $-1$ in the scattering matrix $\Phi(s)$ at $s=1/2$, and $(-1)^{\frac{A}{2}}=\varphi\left(\frac{1}{2}\right)$.

We also consider the order   of the Ruelle zeta function  at other  integers.
\end{abstract}

 \maketitle
\section{Introduction}
In the seminal paper \cite{Selberg1956}, Selberg introduced a trace formula for a hyperbolic surface $X$, which relates the spectral trace of point-pair invariant operators to geometric quantities of the surface.  This work has very high impact to the mathematics and physics community. It has been cited close to 1500 times to date. In this paper, Selberg also introduced the zeta function
$$Z(s)=\prod_{P}\prod_{k=0}^{\infty}\left(1-p^{-s-k}\right)$$ which was named after him afterwards. In this formula, $P$ is the set of simple closed geodesics on the surface $X$, and $\log p$ is the corresponding geodesic length. It was found that this zeta function can be considered as an analogue of the Riemann zeta function, but it satisfies the "Riemann hypothesis" almost by default since the Laplacian operator on the Riemann surface is a positive definite self-adjoint operator.

A lot of works have been done subsequently to furnish the details to Selberg's paper and to generalize his results to various directions, culminating in the two-volume work by Hejhal \cite{Hejhal1976, Hejhal1983}.

The Ruelle zeta function \cite{Ruelle1976} for the hyperbolic surface $X$ is given by
$$R(s)=\prod_P\left(1-p^{-s}\right).$$
It is related to the Selberg zeta function $Z(s)$ by
$$R(s)=\frac{Z(s)}{Z(s+1)}.$$ For compact hyperbolic surfaces and surfaces with cusps, the Ruelle zeta function has been extensively studied, for example in \cite{Fried1986_1, Fried1986, Fried1986_3}, and the results have been extended to higher dimensional hyperbolic manifolds \cite{Gon1997, GonPark2008, GonPark2010, Park2009}.

In this work, we consider cofinite hyperbolic surfaces with cusps and ramification points. We first present the exact expression of the determinant of Laplacian $\Delta-s(1-s)$ in terms of the Selberg zeta function. We then use this to derive the functional equation of the Selberg zeta function and the Ruelle zeta function, and derive the explicit leading term of the Ruelle zeta function $R(s)$ at $s=0$. The result is
\begin{align*}
\lim_{s\rightarrow 0}\frac{R(s)}{s^{2g-2+n-n_0}}=(-1)^{\frac{A}{2}+1}(2\pi)^{2g-2+n }\tilde{\varphi}(0)^{-1} \prod_{j=1}^v m_j,
\end{align*}
where $m_1$, $m_2$, $\ldots$, $m_v$ are the ramification indices  at the ramification points,   and $\tilde{\varphi}(0)$ is the leading coefficient of $\varphi(s)$, the determinant of the scattering matrix $\Phi(s)$, and $A=n-\text{Tr}\,\Phi\left(\frac{1}{2}\right)$.
It is interesting to note the appearance of the term $\di \prod_{j=1}^v m_j$. When consider a Hilbert modular group, Gon \cite{Gon2014}   also obtained a  formula containing the ramification indices.

In this work, we also determine explicitly the order of $R(s)$ at all other integers. It is interesting to note that $R(s)$ can have poles at some negative integers.

\vspace{0.2cm}
\subsection*{Acknowledgements}
  This work is supported by the Ministry of   Education of Malaysia  under   FRGS grant FRGS/1/2018/STG06/XMU/01/1. We would also like to thank L. Takhtajan and J. Friedman who have given helpful comments and suggestions.

\vspace{0.2cm}

\section{The Selberg Zeta Function and the Determinant of Laplacian}
In this section, we review the relation between the Selberg zeta function and the determinant of Laplacian on a cofinite hyperbolic Riemann surface. We then present the explicit functional equation of the Selberg zeta function, and discuss the trivial zeros and poles of the Selberg zeta function.

According to uniformization theorem, for   a cofinite Riemann surface $X$, there is a finitely generated discrete  subgroup $\Gamma$ of $\text{PSL}\,(2, \mathbb{R})$  so that $X=\Gamma\backslash\mathbb{H}$. $\Gamma$ is generated by $2g$ hyperbolics elements $\alpha_1$, $\beta_1$,  $\ldots$, $\alpha_g$, $\beta_g$, $n$ parabolic elements $\kappa_1$, $\ldots$, $\kappa_n$, and $v$ elliptic elements $\tau_1, \ldots, \tau_v$ of orders $m_1, \ldots, m_v$ respectively. These generators  satisfy the nontrivial relation
\begin{align*}
\alpha_1\beta_1\alpha_1^{-1}\beta_1^{-1}\ldots \alpha_g\beta_g\alpha_g^{-1}\beta_g^{-1}\kappa_1\ldots\kappa_n\tau_1\ldots\tau_v=I,
\end{align*}where $I$ is the identity element. We say that the Riemann surface $X$ and the group $\Gamma$ are of type $(g;n;m_1, m_2, \ldots, m_v)$. Each parabolic generator $\kappa_i$ corresponds to a cusp on the Riemann surface $X$, while each elliptic generator $\tau_j$ corresponds to a ramification point, which we also call an elliptic point.

The hyperbolic area of the surface $X$ is given by
$$|X|=2\pi\left\{2g-2+n+\sum_{j=1}^v\left(1-\frac{1}{m_j}\right)\right\}.$$

 Let $\displaystyle \Delta=-y^2\left(\frac{\pa^2}{\pa x^2}+\frac{\pa^2}{\pa y^2}\right)$ be the Laplacian operator that corresponds to the hyperbolic metric
 $$ds^2=\frac{dx^2+dy^2}{y^2}$$ on $\mathbb{H}$.  The Laplacian operator acts on the space of square-integrable functions on $X$, which  correspond to functions $f$ on $\mathbb{H}$ satisfying
$$f(\gamma z)= f(z)\quad \forall \gamma\in\Gamma,\hspace{1cm} \iint\limits_X|f(z)|^2d^2z<\infty.$$

It is well-known that \cite{Iwaniec2002} the spectrum of the Laplacian operator on $X$ consists of a discrete part $0= \lambda_0< \lambda_1\leq \ldots$, as well as a continuous part which covers the interval $[1/4, \infty)$ uniformly with multiplicity $n$. Obviously, the constant functions correspond to the zero eigenvalue $\lambda_0=0$. Since $X$ is connected, the multiplicity of the zero eigenvalue is one.

For every parabolic generator $\kappa_i$, choose $\sigma_i\in \text{PSL}\,(2, \mathbb{R})$ such that $$\sigma_i^{-1}\kappa_i\sigma_i=\begin{pmatrix} 1 & 1\\0 & 1\end{pmatrix}.$$
Let $B$ be the group generated by $\begin{pmatrix} 1 & 1\\0 & 1\end{pmatrix}$ and let $\Gamma_i=\sigma_iB\sigma_i^{-1}$. Then $\Gamma_i$ stabilizes the fixed point of $\kappa_i$. Define the Eisenstein series
\begin{align*}
E_i(z,s)=\sum_{\gamma\in \Gamma_i\backslash\Gamma}\left(\text{Im}\,\sigma_i^{-1}\gamma z\right)^s.
\end{align*}Then as $y\rightarrow\infty$,
$$E_i(\sigma_j z, s)=\delta_{ij} y^{s}+\varphi_{ij}(s)y^{1-s}+\text{exponentially decaying terms}.$$

The scattering matrix $\Phi(s)$   defined by
$$\Phi(s)=\left(\varphi_{ij}(s)\right)_{1\leq i,j\leq n}$$
 is a symmetric matrix. We denote by
$\di \varphi(s)$ its determinant, i.e., $$\varphi(s)=\det\,\Phi(s).$$

Let $h:\mathbb{R}\rightarrow\mathbb{R}$ be an even function such that $h(r)$ is holomorphic in the strip $\left|\,\text{Im}\,r\,\right|\leq\frac{1}{2}+\varepsilon$ and
$$h(t)\ll \frac{1}{(1+|r|)^{2+\varepsilon}}$$
in the strip. Here $\varepsilon$ is a positive number.
Let
\begin{align*}
g(u)=\frac{1}{2\pi}\int_{-\infty}^{\infty} h(r)e^{iru}dr
\end{align*} be the Fourier transform of $h$.

The Selberg trace formula says that \cite{Venkov1982, Hejhal1983, Fischer1987, Iwaniec2002}:
\begin{theorem}[Selberg Trace Formula]
\begin{align*}
&\sum_{j=0}^{\infty}h(r_j)+\frac{1}{4\pi}\int_{-\infty}^{\infty}h(r)\frac{-\varphi'}{\varphi}\left(\frac{1}{2}+ir\right)dr\\
=&\frac{|X|}{4\pi}\int_{-\infty}^{\infty}h(r) r\tanh(\pi r)dr\\
&+\sum_{P}\sum_{\ell=1}^{\infty} \frac{1}{\di p^{\ell/2}-p^{-\ell/2}}g\left(\ell\log p\right)\log p\\
&+\sum_{j=1}^v\sum_{\ell=1}^{m_j-1}\frac{1}{\di 2m_j \sin\frac{\pi \ell}{m_j}}\int_{-\infty}^{\infty}h(r)\frac{\di \cosh \pi\left(1-2\ell/m_j\right)r}{\cosh\pi r}dr\\
&+\frac{Ah(0)}{4} +Cg(0)-\frac{n}{2\pi}\int_{-\infty}^{\infty}h(r)\psi(1+ir)dr.
\end{align*}
Here $r_j$ is defined by $\di \lambda_j=\frac{1}{4}+r_j^2$,  and $P$ runs through conjugacy classes of primitive hyperbolic elements in $\Gamma$. For each hyperbolic element in $P$, $\log p$ is the length of the corresponding closed geodesic. The function $\psi(s)$ is the logarithmic derivative of the gamma function $\Gamma(s)$. The constants $A$ and $C$ are given by
\begin{align*}
A=&n-\text{Tr}\,\Phi\left(\frac{1}{2}\right),\\
C=&-n\log 2.
\end{align*}
\end{theorem}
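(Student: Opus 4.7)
I would prove the trace formula by the classical Selberg kernel method: construct an automorphic convolution kernel from the test function $h$, compute its trace on $L^2(X)$ both spectrally and geometrically, and match the two expressions to obtain the identity. To set things up, fix the Harish-Chandra/Selberg transform pair so that there is a radial point-pair invariant $k(z,w)=K(\cosh d(z,w))$ on $\mathbb{H}$ whose spherical transform equals $h$, and then define the automorphic kernel
$$K_\Gamma(z,w)=\sum_{\gamma\in\Gamma}k(z,\gamma w),$$
which converges by the decay hypothesis on $h$ and defines an integral operator $\mathcal{K}$ on $L^2(X)$. The spherical transform guarantees that Maass eigenforms $\phi_j$ with $\Delta\phi_j=(1/4+r_j^2)\phi_j$ are eigenfunctions of $\mathcal{K}$ with eigenvalue $h(r_j)$, and that the Eisenstein series $E_i(\cdot,1/2+ir)$ are generalized eigenfunctions with the same eigenvalue $h(r)$.

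For the spectral side I would apply the spectral decomposition $L^2(X)=L^2_{\mathrm{disc}}\oplus L^2_{\mathrm{cont}}$. The discrete part yields $\sum_j h(r_j)$ directly. The continuous part, via the Eisenstein Plancherel formula, contributes
$$\frac{1}{4\pi}\int_{-\infty}^{\infty}h(r)\,\frac{-\varphi'}{\varphi}\!\left(\tfrac{1}{2}+ir\right)\,dr,$$
once the spectral density is identified with $-\varphi'/\varphi$ through the functional equation $\Phi(s)\Phi(1-s)=I$ and the Maass--Selberg computation of truncated Eisenstein inner products.

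For the geometric side I would expand $K_\Gamma(z,z)$ by partitioning $\Gamma$ into conjugacy classes of identity, hyperbolic, elliptic, and parabolic elements, and integrate over a fundamental domain. The identity orbit yields the area term $(|X|/4\pi)\int h(r)\,r\tanh(\pi r)\,dr$ from the Plancherel formula on $\mathbb{H}$. Each hyperbolic class is parameterized by a primitive closed geodesic of length $\log p$ with a wrap number $\ell$, producing the double sum involving $g(\ell\log p)\log p/(p^{\ell/2}-p^{-\ell/2})$ after reducing to the cylindrical quotient by the centralizer. Each elliptic class, located at a ramification point of order $m_j$ with rotation index $\ell$, gives by an explicit spherical integral on the disk model
$$\frac{1}{2m_j\sin(\pi\ell/m_j)}\int_{-\infty}^{\infty}h(r)\,\frac{\cosh\pi(1-2\ell/m_j)r}{\cosh\pi r}\,dr.$$
The parabolic classes produce the remaining terms $Ah(0)/4+Cg(0)-(n/2\pi)\int h(r)\psi(1+ir)\,dr$.

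The main obstacle, and the source of the constants $A$ and $C$, is the non-compactness of $X$: the parabolic geometric sum and the discrete spectral sum each diverge, since in the presence of cusps the orbits of unipotent centralizers have infinite volume and $\mathcal{K}$ fails to be trace class on the full $L^2(X)$. The standard remedy is to truncate each cusp at height $Y$ via $\sigma_i$, compute both sides on the truncated domain, and invoke the Maass--Selberg relations to evaluate truncated Eisenstein inner products; on taking $Y\to\infty$, the divergent $\log Y$ and polynomial-in-$Y$ pieces cancel between the two sides, leaving the boundary constants $A=n-\text{Tr}\,\Phi(1/2)$ (from the value of the scattering matrix at the symmetry point of the functional equation) and $C=-n\log 2$ (from the asymptotic expansion of the regularized cusp integral), together with the $\psi(1+ir)$ contribution coming from the Fourier expansion of Eisenstein series at the cusps.
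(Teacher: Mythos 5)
The paper does not prove this theorem at all: it states the trace formula as a known result and defers entirely to the cited references (Venkov, Hejhal Vol.~2, Fischer, Iwaniec). Your sketch is exactly the standard argument carried out in those references --- automorphic kernel built from the Selberg/Harish-Chandra transform, conjugacy-class decomposition of the geometric side, cusp truncation with the Maass--Selberg relations to regularize the divergent parabolic and continuous-spectrum contributions --- and it correctly identifies where each term, including the constants $A=n-\mathrm{Tr}\,\Phi\bigl(\tfrac{1}{2}\bigr)$ and $C=-n\log 2$, originates; so it is consistent with the proof the paper is implicitly relying on.
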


 The term $$\frac{1}{4\pi}\int_{-\infty}^{\infty}h(r)\frac{-\varphi'}{\varphi}\left(\frac{1}{2}+ir\right)dr$$ is the regularized trace of the continuous spectrum.

Putting
\begin{align*}
h(r)=&\frac{1}{\left(s-\frac{1}{2}\right)^2+r^2}-\frac{1}{\left(a-\frac{1}{2}\right)^2+r^2},\end{align*}so that\begin{align*}
g(u)=&\frac{1}{2s-1}e^{-|u|(s-1/2)}-\frac{1}{2a-1}e^{-|u|(a-1/2)}
\end{align*}into the Selberg trace formula
 gives the resolvent trace formula \cite{Iwaniec2002}:

\begin{equation}\label{eq2}\begin{split}
&\sum_{j=0}^{\infty}\left(\frac{1}{\lambda_j-s(1-s)}-\frac{1}{\lambda_j-a(1-a)}\right)\\&+\frac{1}{4\pi}\int_{-\infty}^{\infty}
\left(\frac{1}{\left(s-\frac{1}{2}\right)^2+r^2}-\frac{1}{\left(a-\frac{1}{2}\right)^2+r^2}\right)\frac{-\varphi'}{\varphi}\left(\frac{1}{2}+ir\right)dr\end{split}
\end{equation}
\begin{equation*}\begin{split}
=&\frac{A}{(2s-1)^2} +\frac{C}{2s-1}-\frac{n}{2s-1}\psi\left(s+\frac{1}{2}\right)\\&-\psi(s)\frac{|X|}{2\pi}+\frac{1}{2s-1}\sum_{P}\sum_{k=0}^{\infty}\frac{\log p}{p^{s+k}-1}\\
&+\frac{1}{2s-1}\sum_{j=1}^v\sum_{k=0}^{m_j-1}\frac{2k+1-m_j}{m_j^2}\psi\left(\frac{s+k}{m_j}\right)-\text{ $s$ replaced by $a$}.
\end{split}\end{equation*}

The determinant of $\Delta-s(1-s)$ is defined in the following way \cite{VenkovKalininFaddeev,Efrat1988,Koyama1991_2}. Let
\begin{align*}
\zeta(w,s)=\sum_{j=0}^{\infty}\frac{1}{(\lambda_j-s(1-s))^w}+\frac{1}{4\pi}\int_{-\infty}^{\infty}
\frac{1}{\di \left[\left(s-\tfrac{1}{2}\right)^2+r^2 \right]^w}\frac{-\varphi'}{\varphi}\left(\frac{1}{2}+ir\right)dr
\end{align*}be the spectral zeta function of $X$. This expression is well-defined when $\text{Re}\,w$ is large enough.
 It can be analytically continued to a neighbourhood of $w=0$. The zeta regularized determinant $\det (\Delta-s(1-s))$ is defined as
\begin{align}\label{eq_det}
\det (\Delta-s(1-s))=\exp\left(-\zeta_w(0,s)\right).\end{align}

By uniqueness of analytic continuation, we have
\begin{equation}\label{eq1}\begin{split}
&\frac{d}{ds}\frac{1}{2s-1}\frac{d}{ds}\left(-\zeta_w(0,s)\right)\\
=&\frac{d}{ds}\left\{\sum_{j=0}^{\infty}\left[\frac{1}{(\lambda_j-s(1-s))}-\frac{1}{(\lambda_j-a(1-a))}\right]\right.\\&\left.+\frac{1}{4\pi}\int_{-\infty}^{\infty}
\left[\frac{1}{\di  \left(s-\tfrac{1}{2}\right)^2+r^2  }-\frac{1}{\di  \left(a-\tfrac{1}{2}\right)^2+r^2  }\right]\frac{-\varphi'}{\varphi}\left(\frac{1}{2}+ir\right)dr\right\}.
\end{split}\end{equation}

Integrating \eqref{eq1}  with respect to $s$ using \eqref{eq2},
one would obtain a relation between the determinant of Laplacian $\Delta-s(1-s)$ and the Selberg zeta function up to some constants. The constants can be determined by using the Selberg trace formula with $h(r)=e^{-t(r^2+1/4)}$ to determine the asymptotic behavior of $\log\det\left(\Delta-s(1-s)\right)$ when $s\rightarrow\infty$. For compact hyperbolic surfaces, such a relation was established by D'Hoker and Phong \cite{DHokerPhong1986} and Sarnak \cite{Sarnak1987}.   Efrat \cite{Efrat1988} extended the result to cofinite hyperbolic surfaces without elliptic points. For congruence subgroups $\Gamma_0(N)$, $\Gamma_1(N)$ and $\Gamma(N)$, Koyama    obtained such a relation in \cite{Koyama1991, Koyama1991_2}. Gong \cite{Gong1995}   considered the more general case of Laplacian operators on automorphic forms of nonzero weights.

Recall the definition of the    Alekseevskii-Barnes double gamma function $\Gamma_2(s)$   \cite{Alekseevskii1889, Barnes}:
\begin{align*}
\Gamma_2(s+1)=\frac{1}{(2\pi)^{\frac{s}{2}}}e^{\frac{s}{2}+\frac{\gamma+1}{2}s^2}\prod_{k=1}^{\infty}\left(1+\frac{s}{k}\right)^{-k}e^{s-\frac{s^2}{2k} }.
\end{align*}  The following formula gives an explicit expression for the determinant of Laplacian.

\vspace{0.5cm}
\begin{theorem}\label{Determinant}If $X$ is a cofinite Riemann surface of type  $(g;n;m_1, m_2, \ldots, m_v)$, then the regularized determinant of its Laplacian is given by
\begin{align}\label{eq11}
\det\left(\Delta-s(1-s)\right)=&Z_{\infty}(s)Z(s)Z_{\text{ell}}(s)\Gamma\left(s+\frac{1}{2}\right)^{-n}(2s-1)^{\frac{A}{2}}e^{B\left(s-\frac{1}{2}\right)^2+C(s-\frac{1}{2})+D}
\end{align}where
\begin{align}\label{eq18}
Z(s)=\prod_{P}\prod_{k=0}^{\infty}\left(1-p^{-s-k}\right)
\end{align}is the Selberg zeta function of the surface $X$,
\begin{align*}
Z_{\infty}(s)=&\left(\frac{(2\pi)^{s}\Gamma_2(s)^2}{\Gamma(s)}\right)^{\frac{|X|}{2\pi}},\hspace{1cm}
Z_{\text{ell}}(s)=\prod_{j=1}^{v}\prod_{k=0}^{m_j-1}\Gamma\left(\frac{s+k}{m_j}\right)^{\frac{2k+1-m_j}{m_j}},\\
A=&n-\text{Tr}\,\Phi\left(\frac{1}{2}\right),\hspace{1cm}B=-\frac{|X|}{2\pi},\hspace{1cm}C=-n\log 2,\\
D=&\sum_{j=1}^v\frac{m_j^2-1}{6m_j}\log m_j+\frac{n}{2}\log 2\pi-\frac{|X|}{2\pi}\left(\frac{1}{2}\log2\pi-2\zeta'(-1)\right)-\frac{A}{2}\log 2.
\end{align*}
\end{theorem}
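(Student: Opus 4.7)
The plan is to integrate the resolvent trace formula \eqref{eq2} twice---using the identity \eqref{eq1}---to recover $-\zeta_w(0,s)=\log\det(\Delta-s(1-s))$ in closed form up to a polynomial in $(s-\tfrac12)$ of degree $\le 2$, and then to fix the remaining constants by matching the $s\to\infty$ asymptotic expansion obtained from the heat-kernel specialisation $h(r)=e^{-t(r^2+1/4)}$ of the Selberg trace formula. This is the route followed in the compact case by D'Hoker-Phong \cite{DHokerPhong1986} and Sarnak \cite{Sarnak1987}, and extended to the cuspidal case by Efrat \cite{Efrat1988} and Koyama \cite{Koyama1991, Koyama1991_2}; here I also add the contribution of the elliptic fixed points.

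\textbf{Integration.} By \eqref{eq1}, $\frac{1}{2s-1}\frac{d}{ds}(-\zeta_w(0,s))$ has the same $s$-derivative as the LHS of \eqref{eq2}, so one antidifferentiation gives $\frac{1}{2s-1}\frac{d}{ds}(-\zeta_w(0,s))=\mathcal{R}(s)+c_1$, where $\mathcal{R}(s)$ is the $s$-dependent part of the RHS of \eqref{eq2}. Multiplying by $2s-1$ and integrating once more yields
$$-\zeta_w(0,s) \;=\; \int (2s-1)\mathcal{R}(s)\,ds \;+\; c_1(s^2-s)\;+\;c_2.$$
Each piece of $(2s-1)\mathcal{R}(s)$ integrates explicitly: the $A/(2s-1)^2$, $C/(2s-1)$ and $-n\psi(s+\tfrac12)/(2s-1)$ terms produce $\tfrac{A}{2}\log(2s-1)$, $Cs$ and $-n\log\Gamma(s+\tfrac12)$; the prime-geodesic sum collapses to $\log Z(s)$ via the identity $\frac{d}{ds}\log(1-p^{-s-k})=\log p/(p^{s+k}-1)$; and the elliptic sum integrates termwise to $\log Z_{\text{ell}}(s)$. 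The only nontrivial integral is $-\frac{|X|}{2\pi}\int(2s-1)\psi(s)\,ds$, which I handle by integration by parts combined with the Kinkelin-Alekseevskii identity $\int_0^s\log\Gamma(x)\,dx=\tfrac{s(1-s)}{2}+\tfrac{s}{2}\log(2\pi)+s\log\Gamma(s)+\log\Gamma_2(s+1)$ and the shift relation $\Gamma_2(s+1)=\Gamma_2(s)/\Gamma(s)$; the outcome is $\log Z_\infty(s)+B(s-\tfrac12)^2$ modulo constants, with $B=-|X|/(2\pi)$ emerging automatically. At this point \eqref{eq11} is proved up to a residual polynomial $c_1(s-\tfrac12)^2+(\text{const})$.

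\textbf{Fixing the constants.} The remaining step is to show $c_1=0$ and to identify $D$, and this is the main technical obstacle. I substitute $h(r)=e^{-t(r^2+1/4)}$ into the Selberg trace formula, extract the small-$t$ expansion of the regularized heat trace, and Mellin-transform in $t$ to obtain the $s\to\infty$ expansion of $\log\det(\Delta-s(1-s))$. On the other side I expand each factor on the RHS of \eqref{eq11} using Stirling for $\Gamma(s+\tfrac12)$ and $\Gamma(s)$ (noting that $\log Z(s)\to 0$ exponentially fast) and the classical double-gamma asymptotic $\log\Gamma_2(s)\sim\tfrac{1}{2}s^2\log s-\tfrac34 s^2+\tfrac12 s\log s-\tfrac{s}{2}\log(2\pi)+\zeta'(-1)+\cdots$. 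Matching the coefficients of $(s-\tfrac12)^2$, $(s-\tfrac12)$ and the constant forces $c_1=0$ (confirming $B=-|X|/(2\pi)$ and $C=-n\log 2$) and, after collecting the elliptic contribution $\sum_j\tfrac{m_j^2-1}{6m_j}\log m_j$, the cusp contribution $\tfrac{n}{2}\log 2\pi$, the $-\tfrac{A}{2}\log 2$ from the $(2s-1)^{A/2}$ factor, and the $\zeta'(-1)$ coming from $\Gamma_2$, produces the stated value of $D$. The algebraic shape of the identity is completely dictated by the first two steps; the real work is this last bookkeeping, since every $\log 2$, $\log\pi$ and $\zeta'(-1)$ on both sides of \eqref{eq11} must be tracked.
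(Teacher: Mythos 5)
Your proposal follows exactly the route the paper itself indicates (the paper gives only a two-sentence sketch before Theorem \ref{Determinant}): integrate the resolvent trace formula \eqref{eq2} twice via \eqref{eq1}, identify each term with $\log Z$, $\log Z_{\mathrm{ell}}$, $\log\Gamma(s+\tfrac12)$, $\tfrac{A}{2}\log(2s-1)$ and $\log Z_\infty$, and then pin down the residual quadratic polynomial by matching the $s\to\infty$ asymptotics obtained from the heat-kernel test function $h(r)=e^{-t(r^2+1/4)}$. The elaboration is correct and supplies more detail than the paper does, so there is nothing to add.
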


\vspace{0.5cm}
\begin{remark}\label{remark1} The constant
$$A=n-\text{Tr}\Phi\,\left(\frac{1}{2}\right)$$ is an even integer.  This can be shown as follows. Since
 $\di \Phi\,\left(\frac{1}{2}\right)$ is Hermitian, there exists a unitary matrix $U$ and a diagonal matrix $D$ such that
\begin{align*}
\Phi\left(\frac{1}{2}\right)=UDU^*.
\end{align*}
Since $\Phi(s)\Phi(1-s)=I$, we find that $\Phi(1/2)^2=I$. Hence,
\begin{align*}
I=\Phi\left(\frac{1}{2}\right)^2=UD^2U^*.
\end{align*}
This shows that $D^2=I$. Therefore, all the diagonal entries in $D$ are either 1 or $-1$. Assume that $n_+$ of them is 1 and $n_-$ of them is $-1$.
Then
$$n_++n_-=n$$
and
\begin{align*}
\text{Tr}\Phi\,\left(\frac{1}{2}\right)=n_+-n_-
\end{align*}
Hence,
$$A=n-(n_+-n_-)=2n_-$$
is an even integer. This shows that $(2s-1)^{\frac{A}{2}}$ is well-defined.\end{remark}

\vspace{0.5cm}
\begin{remark}
From the right hand side of \eqref{eq11}, we notice that on the moduli space of Riemann surfaces of type $(g;n;m_1, m_2, \ldots, m_v)$, only the Selberg zeta function $Z(s)$ depends on the moduli parameters. Hence, when one is only concerned with the variation of the determinant of Laplacian on the moduli space, such as in \cite{TakhtajanZograf1991, TakhtajanZograf2017}, one can take $Z(s)$ to be the determinant of Laplacian, up to a constant.
\end{remark}

\vspace{0.2cm}

As discuss in \cite{Efrat1988}, the determinant of Laplacian defined by \eqref{eq_det} which include the contribution of the continuous spectrum, is not invariant under the change $s\mapsto 1-s$. As discussed in \cite{Venkov1982}, the integral
$$-\frac{1}{4\pi}\int_{-\infty}^{\infty}
\left[\frac{1}{\di  \left(s-\tfrac{1}{2}\right)^2+r^2  }-\frac{1}{\di  \left(a-\tfrac{1}{2}\right)^2+r^2  }\right]\frac{\varphi'}{\varphi}\left(\frac{1}{2}+ir\right)dr$$ is equal to
\begin{align*}
-\frac{1}{2}\frac{1}{2s-1}\frac{\varphi'(s)}{\varphi(s)}+\text{a function of $s(1-s)$}.
\end{align*}Since $\varphi(s)\varphi(1-s)=1$, we have
$$\frac{\varphi'(s)}{\varphi(s)}=\frac{\varphi'(1-s)}{\varphi(1-s)}.$$ Hence, the term
$$-\frac{1}{2}\frac{1}{2s-1}\frac{\varphi'(s)}{\varphi(s)}$$ would gain an extra minus sign when $s$ is changed to $1-s$.
Therefore,
\begin{align*}
\det\left(\Delta-s(1-s)\right)=D(s(1-s))\varphi(s)^{-\frac{1}{2}},
\end{align*}where $D(s(1-s))$ is a function invariant under the change $s\mapsto 1-s$.

\vspace{1cm}

\begin{proposition}\label{functional}The functional equation of the Selberg zeta function is given by
\begin{align}\label{eq12}
Z(1-s)=\varkappa(s)Z(s),
\end{align}where
\begin{align*}
\varkappa(s)=&(-1)^{\frac{A}{2}}e^{C(2s-1)}\varphi(s)\left(\frac{(2\pi)^{2s-1}\Gamma_2(s)^2\Gamma(1-s)}{\Gamma_2(1-s)^2\Gamma(s)}\right)^{\frac{|X|}{2\pi}}
\left(\frac{\di\Gamma\left(\frac{3}{2}-s\right)}{\di\Gamma\left(s+\frac{1}{2}\right)}\right)^n\\&\times\prod_{j=1}^{v}\prod_{k=0}^{m_j-1}\left[ \sin \frac{\pi(s+k)}{m_j}\right]^{\frac{m_j-2k-1}{m_j}}.
\end{align*}
The notations are the same as in Theorem \ref{Determinant}.
\end{proposition}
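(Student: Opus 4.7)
The plan is to exploit the functional-equation observation made just before the proposition, namely that
$$\det(\Delta-s(1-s))\,\varphi(s)^{1/2}$$
is invariant under $s\mapsto 1-s$, and to read off $Z(1-s)/Z(s)$ from the explicit formula \eqref{eq11} of Theorem \ref{Determinant}. Substituting \eqref{eq11} on both sides of the invariance relation and solving for $Z(1-s)/Z(s)$, one gets
$$\frac{Z(1-s)}{Z(s)}=\frac{Z_{\infty}(s)}{Z_{\infty}(1-s)}\cdot\frac{Z_{\text{ell}}(s)}{Z_{\text{ell}}(1-s)}\cdot\left(\frac{\Gamma(\tfrac{3}{2}-s)}{\Gamma(s+\tfrac{1}{2})}\right)^{n}\cdot\frac{(2s-1)^{A/2}}{(1-2s)^{A/2}}\cdot e^{C(2s-1)}\cdot\frac{\varphi(s)^{1/2}}{\varphi(1-s)^{1/2}},$$
since the factor $e^{B(s-1/2)^2+D}$ is manifestly invariant under $s\mapsto 1-s$ and drops out.

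Next I would simplify each surviving factor. The functional equation $\varphi(s)\varphi(1-s)=1$ gives $\varphi(s)^{1/2}/\varphi(1-s)^{1/2}=\varphi(s)$. Writing $1-2s=-(2s-1)$ yields $(2s-1)^{A/2}/(1-2s)^{A/2}=(-1)^{A/2}$, which is well defined because $A$ is even (Remark \ref{remark1}). The ratio $Z_{\infty}(s)/Z_{\infty}(1-s)$ is a direct substitution and immediately produces the bracket $((2\pi)^{2s-1}\Gamma_2(s)^2\Gamma(1-s)/(\Gamma_2(1-s)^2\Gamma(s)))^{|X|/(2\pi)}$ in the definition of $\varkappa(s)$.

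The only nontrivial step is the elliptic ratio $Z_{\text{ell}}(s)/Z_{\text{ell}}(1-s)$. I would reindex the inner product in $Z_{\text{ell}}(1-s)$ via $k\mapsto m_j-1-k$, so that $(1-s+k)/m_j$ becomes $1-(s+k)/m_j$ and the exponent $(2k+1-m_j)/m_j$ becomes its negative. Pairing term by term with $Z_{\text{ell}}(s)$ and applying the reflection formula $\Gamma(x)\Gamma(1-x)=\pi/\sin(\pi x)$ gives
$$\frac{Z_{\text{ell}}(s)}{Z_{\text{ell}}(1-s)}=\prod_{j=1}^{v}\prod_{k=0}^{m_j-1}\left(\frac{\pi}{\sin\tfrac{\pi(s+k)}{m_j}}\right)^{\!(2k+1-m_j)/m_j}.$$
The $\pi$-factors then disappear because $\sum_{k=0}^{m_j-1}(2k+1-m_j)=0$, leaving exactly the sine product appearing in $\varkappa(s)$ (with the sign of the exponent reversed, i.e.\ $(m_j-2k-1)/m_j$). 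Assembling all pieces reproduces the stated formula for $\varkappa(s)$.

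The main technical obstacle is the elliptic-term bookkeeping: keeping track of the exponents under the reindexing $k\mapsto m_j-1-k$, verifying the sum-of-exponents identity that kills the $\pi$-factors, and correctly tracking the sign in $(m_j-2k-1)/m_j$ vs.\ $(2k+1-m_j)/m_j$. Everything else is an essentially mechanical substitution into \eqref{eq11}.
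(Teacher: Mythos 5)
Your proposal is correct and follows essentially the same route as the paper: both divide the explicit determinant formula \eqref{eq11} at $s$ and at $1-s$ using the invariance of $D(s(1-s))=\det(\Delta-s(1-s))\varphi(s)^{1/2}$, simplify $\varphi(s)^{1/2}/\varphi(1-s)^{1/2}=\varphi(s)$ and $(2s-1)^{A/2}/(1-2s)^{A/2}=(-1)^{A/2}$, and handle the elliptic ratio by the reindexing $k\mapsto m_j-1-k$, the reflection formula, and the identity $\sum_{k=0}^{m-1}(2k+1-m)=0$. No gaps.
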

\begin{proof}
From \eqref{eq11}, we have
\begin{align*}
D\left(s(1-s)\right)\varphi(s)^{-\frac{1}{2}}=&Z_{\infty}(s)Z(s)Z_{\text{ell}}(s)\Gamma\left(s+\frac{1}{2}\right)^{-n}(2s-1)^{\frac{A}{2}}e^{B\left(s-\frac{1}{2}\right)^2+C(s-\frac{1}{2})+D}.
\end{align*}
Changing $s$ to $1-s$, we find that
\begin{align*}
D\left(s(1-s)\right)\varphi(1-s)^{-\frac{1}{2}}=&Z_{\infty}(1-s)Z(1-s)Z_{\text{ell}}(1-s)\Gamma\left(\frac{3}{2}-s\right)^{-n}(1-2s)^{\frac{A}{2}}e^{B\left(s-\frac{1}{2}\right)^2-C(s-\frac{1}{2})+D}.
\end{align*}Therefore,
\begin{align*}
Z(1-s)=\varkappa(s)Z(s),
\end{align*}where
\begin{align*}
\varkappa(s)=\frac{\varphi(1-s)^{-\frac{1}{2}}}{\varphi(s)^{-\frac{1}{2}}}\frac{\di Z_{\infty}(s)Z_{\text{ell}}(s)}{\di Z_{\infty}(1-s)Z_{\text{ell}}(1-s)}(-1)^{\frac{A}{2}}e^{C(2s-1)}\left(\frac{\di\Gamma\left(\frac{3}{2}-s\right)}{\di\Gamma\left(s+\frac{1}{2}\right)}\right)^n,
\end{align*}
Now,
\begin{align*}
\frac{\varphi(1-s)^{-\frac{1}{2}}}{\varphi(s)^{-\frac{1}{2}}}=\varphi(s).
\end{align*}
\begin{align*}
\frac{\di Z_{\text{ell}}(s)}{\di  Z_{\text{ell}}(1-s)}=\prod_{j=1}^{v}\prod_{k=0}^{m_j-1}\Gamma\left(\frac{s+k}{m_j}\right)^{\frac{2k+1-m_j}{m_j}}
\prod_{j=1}^{v}\prod_{k=0}^{m_j-1}\Gamma\left(\frac{k+1-s}{m_j}\right)^{-\frac{2k+1-m_j}{m_j}}.
\end{align*}Replacing $k$ by $m_j-1-k$ in the second product, we have
\begin{align}\label{eq14}
\frac{\di Z_{\text{ell}}(s)}{\di  Z_{\text{ell}}(1-s)}=\prod_{j=1}^{v}\prod_{k=0}^{m_j-1}\Gamma\left(\frac{s+k}{m_j}\right)^{\frac{2k+1-m_j}{m_j}}
\prod_{j=1}^{v}\prod_{k=0}^{m_j-1}\Gamma\left(1-\frac{s+k}{m_j}\right)^{\frac{2k+1-m_j}{m_j}}.
\end{align}
Using the fact that
\begin{align*}
\Gamma(s)\Gamma(1-s)=\frac{\pi}{\sin\pi s}
\end{align*}and
$$\sum_{k=0}^{m-1}(2k+1-m)=0,$$we obtain
\begin{align}\label{eq13}
\frac{\di Z_{\text{ell}}(s)}{\di  Z_{\text{ell}}(1-s)}=\prod_{j=1}^{v}\prod_{k=0}^{m_j-1}\left[ \sin \frac{\pi(s+k)}{m_j}\right]^{\frac{m_j-2k-1}{m_j}}.
\end{align}
The result follows.
\end{proof}

\vspace{0.5cm}

One can deduce the orders   of the Selberg zeta function when $s$ is an integer or half-integer from the resolvent trace formula \eqref{eq2}. We can also obtain this information from the equation \eqref{eq11} and the functional equation \eqref{eq12}.

Since $\Delta$ has eigenvalue $\lambda_0=0$ with multiplicity one, it follows from the resolvent trace formula \eqref{eq2} that the Selberg zeta function $Z(s)$ has a zero of order one at $s=1$. Hence, we can write
\begin{align}\label{eq21}\det\left(\Delta-s(1-s)\right)=s(s-1)\det\!^{\prime}\left(\Delta-s(1-s)\right),\end{align} where the term $\det^{\prime}\left(\Delta-s(1-s)\right)$ excludes contribution from the zero eigenvalue, and it does not vanish when $s=0$ or $s=1$.

Now   $\Gamma_2(s)$ has a simple pole of order 1 at $s=0$ with residue one. We find that   as $s\rightarrow 0$,
\begin{align*}
Z_{\infty}(s)\sim s^{-\frac{|X|}{2\pi}}.
\end{align*}On the other hand, explicit expression for $Z_{\text{ell}}(s)$ shows that  it has a zero of order
\begin{align*}
\sum_{j=1}^v\left(1-\frac{1}{m_j}\right)
\end{align*}at $s=0$. Using
\begin{align}\label{eq17}
\frac{|X|}{2\pi}=2g-2+n+\sum_{j=1}^r\left(1-\frac{1}{m_j}\right),
\end{align}we conclude from \eqref{eq12} that if the order of $\varphi(s)$ at $s=0$ is $n_0$, then the order of $Z(s)$ at $s=0$ is $2g-1+n-n_0$.

\vspace{0.5cm}
\begin{proposition}\label{Pzero}For a confinite Riemann surface of type $(g;n;m_1, \ldots, m_v)$, its Selberg zeta function $Z(s)$ has a zero of order one at $s=1$. The order of $Z(s)$ at $s=0$ is $2g-1+n-n_0$, where $n_0$ is the order of $\varphi(s)$ at $s=0$.

\end{proposition}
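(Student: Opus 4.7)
The plan is to prove the two order statements separately: the zero at $s=1$ via the resolvent trace formula \eqref{eq2}, and the order at $s=0$ via the functional equation \eqref{eq12} of Proposition \ref{functional}.

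For the simple zero at $s=1$, I would match residues on both sides of \eqref{eq2} as $s \to 1$. On the left-hand side, the discrete spectral sum $\sum_j 1/(\lambda_j - s(1-s))$ has a simple pole at $s=1$ with residue $1$, supplied solely by the multiplicity-one eigenvalue $\lambda_0 = 0$; every other term on both sides (the $-\varphi'/\varphi$ integral, $A/(2s-1)^2$, $C/(2s-1)$, the $\psi$ contributions, and the elliptic $\psi((s+k)/m_j)$ sum) is regular at $s=1$. The only candidate on the right that can carry this residue is
$$\frac{1}{2s-1}\sum_P\sum_{k=0}^\infty \frac{\log p}{p^{s+k}-1}=\frac{1}{2s-1}\,\frac{Z'(s)}{Z(s)},$$
whose residue at $s=1$ equals $\text{ord}_{s=1} Z(s)$. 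Matching forces this order to be $1$.

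For the order at $s=0$, I would substitute into $Z(1-s) = \varkappa(s) Z(s)$: since $\text{ord}_{s=0} Z(1-s) = \text{ord}_{s=1} Z(s) = 1$, one needs $\text{ord}_{s=0} Z(s) = 1 - \text{ord}_{s=0}\varkappa(s)$. Reading the formula for $\varkappa(s)$ from Proposition \ref{functional} factor by factor at $s=0$: $\varphi(s)$ contributes $n_0$; the ratio $\Gamma_2(s)^2\Gamma(1-s)/[\Gamma_2(1-s)^2\Gamma(s)]$ has order $-1$ (a double pole from $\Gamma_2(s)^2$ partially cancelled by the simple pole of $1/\Gamma(s)$), hence contributes $-|X|/(2\pi)$ after raising to the power $|X|/(2\pi)$; the sine product contributes $\sum_j(1-1/m_j)$, because in each inner product over $k$ only the $k=0$ factor vanishes, like $\sin(\pi s/m_j)\sim \pi s/m_j$, with exponent $(m_j-1)/m_j$; and $(-1)^{A/2}$, $e^{C(2s-1)}$, $(2\pi)^{2s-1}$, $[\Gamma(\tfrac{3}{2}-s)/\Gamma(s+\tfrac{1}{2})]^n$ are all finite and nonzero at $s=0$. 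Summing and applying \eqref{eq17} yields $\text{ord}_{s=0}\varkappa(s)=n_0-(2g-2+n)$, so $\text{ord}_{s=0} Z(s) = 2g-1+n-n_0$.

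No serious obstacle arises; the argument is essentially bookkeeping. The one subtle point is that the exponents $(m_j-2k-1)/m_j$ in the sine product are generically non-integer, so I must verify that only the $k=0$ term in each inner product actually affects the order at $s=0$, and check that the individually non-integer contributions from the sine product and from $|X|/(2\pi)$ recombine into the integer $2g-2+n$, as they must if $Z(s)$ is to be genuinely meromorphic with an integer order at $s=0$.
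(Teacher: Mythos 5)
Your argument is correct and follows the same route as the paper: the simple zero at $s=1$ is read off from the residue of the $\frac{1}{2s-1}\frac{Z'}{Z}(s)$ term in the resolvent trace formula \eqref{eq2} against the pole contributed by $\lambda_0=0$, and the order at $s=0$ is obtained by tallying the order of $\varkappa(s)$ in the functional equation \eqref{eq12} (the paper does this via $Z_\infty$ and $Z_{\text{ell}}$ rather than the simplified sine product, but the bookkeeping is identical and both reduce to $n_0-(2g-2+n)$ via \eqref{eq17}). Your computation of each factor's contribution, including the $k=0$ sine factors and the $\Gamma_2(s)^2/\Gamma(s)$ ratio, matches the paper's.
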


The infinite product expression for the Selberg zeta function \eqref{eq18} is convergent when $\text{Re}\,s>1$. Hence, we can deduce from the relation
\begin{align*}
Z(s)=\varkappa(1-s)Z(1-s)
\end{align*}the zeros of $Z(s)$ on the half plane $\text{Re}\,s<0$.

\begin{theorem}\label{negativezero}
Let $X$ be a  confinite Riemann surface of type $(g;n;m_1, \ldots, m_v)$ and let $Z(s)$ be its Selberg zeta function $Z(s)$.
\begin{enumerate}
\item[(a)]
  $Z(s)$ has a pole of order $n$ at $s=-1/2, -3/2, -5/2, \ldots$.
\item[(b)] For $k=1, 2, 3, \ldots$,  $Z(s)$ has a zero of order
$$(2k+1)(2g-2+n)+2\sum_{j=1}^v\left(k-\left\lfloor  \frac{k}{m_j}\right\rfloor\right)$$ at $s=-k$.\end{enumerate}$Z(s)$ does not have other zeros and poles on the half plane $\text{Re}\,s<0$.
\end{theorem}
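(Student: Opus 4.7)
The plan is to exploit the functional equation $Z(s)=\varkappa(1-s)Z(1-s)$ from Proposition \ref{functional}. By the infinite product \eqref{eq18}, $Z(1-s)$ is holomorphic and nonvanishing on the half-plane $\text{Re}\,(1-s)>1$, that is, on $\text{Re}\,s<0$. Therefore every zero and pole of $Z(s)$ on $\text{Re}\,s<0$ comes from $\varkappa(1-s)$, and the claim reduces to locating the zeros and poles of $\varkappa(1-s)$ and computing their orders. We will also use that $\varphi(s)$ is holomorphic and nonvanishing for $\text{Re}\,s>1$, so the factor $\varphi(1-s)$ in $\varkappa(1-s)$ plays no role on $\text{Re}\,s<0$.

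Using the explicit form of $\varkappa(s)$, the factors of $\varkappa(1-s)$ that can contribute singularities on the left half-plane are: the double-gamma factor, the gamma factor $(\Gamma(s+\tfrac12)/\Gamma(\tfrac32-s))^{n}$, and the elliptic sine product. Recall that $\Gamma_2(u)$ has a pole of order $1-u$ at every nonpositive integer $u=0,-1,-2,\ldots$ (this follows directly from the Weierstrass product in the definition). Since $\Gamma$ and $\Gamma_2$ are regular and nonzero at positive real arguments and at negative half-integers, the analysis cleanly separates into the integer and half-integer cases.

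For part (a), at $s=-m-\tfrac12$ with $m\ge 0$, we evaluate $\varkappa(1-s)$ at $t=\tfrac32+m$. Both $\Gamma_2(t)$, $\Gamma_2(1-t)$, $\Gamma(t)$, $\Gamma(1-t)$ are finite and nonzero, and so are all the sine values $\sin\pi(1-s+\ell)/m_j$ (the argument is a half-integer divided by a positive integer, hence never an integer multiple of $\pi$). The only singular factor is $(\Gamma(s+\tfrac12)/\Gamma(\tfrac32-s))^{n}$: at $s=-m-\tfrac12$, $\Gamma(s+\tfrac12)=\Gamma(-m)$ contributes a simple pole while $\Gamma(\tfrac32-s)=\Gamma(2+m)$ is regular. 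This gives $\varkappa(1-s)$ a pole of order $n$, hence the desired pole of $Z(s)$.

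For part (b), at $s=-k$ with $k\ge 1$, we evaluate at $t=1+k$. The gamma factor $(\Gamma(s+\tfrac12)/\Gamma(\tfrac32-s))^n$ is regular since $s+\tfrac12$ and $\tfrac32-s$ are nonzero half-integers. The $|X|/2\pi$-factor contains $\Gamma_2(1-s)^2\Gamma(s)/(\Gamma_2(s)^2\Gamma(1-s))$, which at $s=-k$ has, in the numerator, a simple pole from $\Gamma(s)$ and, in the denominator, a pole of total order $2(k+1)$ from $\Gamma_2(s)^2$; so this base has a zero of order $2(k+1)-1=2k+1$, contributing $(2k+1)\cdot|X|/2\pi$ to the order. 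The main bookkeeping step is the elliptic contribution. For each $j$ there is a unique $\ell_j\in\{0,\ldots,m_j-1\}$ with $m_j\mid(k+\ell_j+1)$, namely $\ell_j=m_j-1-r_j$ where $r_j=k\bmod m_j$; this produces a simple zero of $\sin\pi(1-s+\ell_j)/m_j$ at $s=-k$, with exponent $(m_j-2\ell_j-1)/m_j=(2r_j+1-m_j)/m_j$. Summing all contributions and using \eqref{eq17},
\begin{align*}
(2k+1)\frac{|X|}{2\pi}+\sum_{j=1}^v\frac{2r_j+1-m_j}{m_j}
&=(2k+1)(2g-2+n)+\sum_{j=1}^v\Bigl[(2k+1)\tfrac{m_j-1}{m_j}+\tfrac{2r_j+1-m_j}{m_j}\Bigr]\\
&=(2k+1)(2g-2+n)+\sum_{j=1}^v\Bigl[2k+\tfrac{2(r_j-k)}{m_j}\Bigr]\\
&=(2k+1)(2g-2+n)+2\sum_{j=1}^v\Bigl(k-\Bigl\lfloor\tfrac{k}{m_j}\Bigr\rfloor\Bigr),
\end{align*}
using $r_j-k=-m_j\lfloor k/m_j\rfloor$. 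This yields the claimed order, and the absence of other zeros or poles on $\text{Re}\,s<0$ follows because all factors of $\varkappa(1-s)$ have been accounted for. The main obstacle is the algebraic identity above: although the $|X|/2\pi$-power and each sine factor carry non-integer orders, their combination produces the integer order exhibited in the statement; the index matching $r_j=k\bmod m_j$ and the resulting $\lfloor k/m_j\rfloor$ are precisely what makes the contributions conspire.
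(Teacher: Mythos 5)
Your proposal is correct and follows essentially the same route as the paper: reduce everything to the zeros and poles of $\varkappa(1-s)$ via the functional equation, read off the order-$n$ poles from $\Gamma\left(s+\frac{1}{2}\right)^n$ at negative half-integers, and combine the $(2k+1)\frac{|X|}{2\pi}$ contribution of the double-gamma factor with the elliptic contribution (which you track through the sine product rather than through $Z_{\text{ell}}$ directly, an equivalent bookkeeping) to get the stated integer order at $s=-k$. The only piece of the paper's proof you omit is the short final verification that $(2k+1)(2g-2+n)+2\sum_{j}\left(k-\left\lfloor k/m_j\right\rfloor\right)\geq 0$, which is needed to justify calling this quantity the order of a \emph{zero}; it follows from $\frac{|X|}{2\pi}>0$ and integrality, and would be worth adding.
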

\begin{proof}Since $Z(s)$ is regular when $\text{Re}\,s>1$, all the zeros and poles of $Z(s)$ on the half-plane $\text{Re}\,s<0$ comes from
\begin{align*}
\varkappa(1-s)=\varphi(1-s)\frac{\di Z_{\infty}(1-s)Z_{\text{ell}}(1-s)}{\di Z_{\infty}(s)Z_{\text{ell}}(s)}(-1)^{\frac{A}{2}}e^{C(1-2s)}\left(\frac{\di\Gamma\left(\frac{1}{2}+s\right)}{\di\Gamma\left(\frac{3}{2}-s\right)}\right)^n.
\end{align*}
When $\text{Re}\,s<0$, $\varphi(1-s)$, $Z_{\infty}(1-s)$, $Z_{\text{ell}}(1-s)$ and $\di\Gamma\left(\frac{3}{2}-s\right)$ do not have poles or zeros.

$\di\Gamma\left(\frac{1}{2}+s\right)$ has poles of order one at $s=-1/2, -3/2, -5/2, \ldots$, which give rise to poles of order $n$ at $s=-1/2, -3/2, -5/2, \ldots$ for $Z(s)$.

$Z_{\infty}(s)$ has poles of order $\di(2k+1)\frac{|X|}{2\pi}$ at $s=-k$, where $k=1, 2, \ldots$, while
the order of $Z_{\text{ell}}(s)$ at $s=-k$ is
$$-\sum_{j=1}^v\frac{2\left(k-\di\left\lfloor  \frac{k}{m_j}\right\rfloor m_j \right)+1-m_j}{m_j}.$$

Hence,  the order of  $Z(s)$ at $s=-k$ is
\begin{align*}
s_k=&(2k+1)\left[2g-2+n+\sum_{j=1}^v\left(1-\frac{1}{m_j}\right)\right]+\sum_{j=1}^v\frac{2\left(k-\di\left\lfloor  \frac{k}{m_j}\right\rfloor m_j \right)+1-m_j}{m_j}\\
=&(2k+1)(2g-2+n)+2\sum_{j=1}^v\left(k-\left\lfloor  \frac{k}{m_j}\right\rfloor\right).
\end{align*}This gives the desired formula for the order of $Z(s)$ at $s=-k$. Finally let us prove that this is a nonnegative integer.
Notice that
\begin{align*}
&(2k+1)(2g-2+n)+2\sum_{j=1}^v\left(k-\left\lfloor  \frac{k}{m_j}\right\rfloor\right)\\
=&2\left[k(2g-2+n)+\sum_{j=1}^v\left(k-\left\lfloor  \frac{k}{m_j}\right\rfloor\right)\right]+2g-2+n.\end{align*}
Now
\begin{align*}
k(2g-2+n)+\sum_{j=1}^v\left(k-\left\lfloor  \frac{k}{m_j}\right\rfloor\right)
\geq &k(2g-2+n)+\sum_{j=1}^v\left(k- \frac{k}{m_j} \right)\\
=&k\left[2g-2+n+\sum_{j=1}^v\left(1- \frac{1}{m_j} \right)\right]
>0.
\end{align*}Since the left hand side  is an integer, we must have
$$k(2g-2+n)+\sum_{j=1}^v\left(k-\left\lfloor  \frac{k}{m_j}\right\rfloor\right)\geq 1.$$ Together with
$$2g-2+n\geq -2, $$ we find that
$$s_k\geq 0.$$\end{proof}

\vspace{0.5cm} Finally, we would like to give an explicit   expression for $\det'\Delta$ which is of particular interest.

\begin{theorem}\label{Main2}
If $X$ is a cofinite Riemann surface of type  $(g;n;m_1, m_2, \ldots, m_v)$, then
\begin{align}\label{eqnew}
\det\!^{\prime}\Delta= c_1Z'(1),
\end{align}
where
\begin{align*}
c_1=&2^{n-\frac{A}{2}}(2\pi)^{\frac{|X|}{4\pi}}\mathscr{E}\prod_{j=1}^{v}\prod_{k=1}^{m_j-1}\Gamma\left(\frac{k}{m_j}\right)^{\frac{2k-1-m_j}{m_j}},
\end{align*}
and
$$\mathscr{E}=\exp\left\{\sum_{j=1}^v\frac{m_j^2-1}{6m_j}\log m_j+\frac{|X|}{2\pi}\left(2\zeta'(-1)-\frac{1}{4}\right) \right\}.$$
\end{theorem}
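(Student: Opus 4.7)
The plan is to take the limit $s \to 1$ in identity \eqref{eq21} after substituting the closed form \eqref{eq11}. At $s = 1$ both $s(s-1)$ and $Z(s)$ vanish to first order---the latter by Proposition \ref{Pzero}---while none of $Z_\infty(s)$, $Z_{\text{ell}}(s)$, $\Gamma(s+\tfrac12)^{-n}$, $(2s-1)^{A/2}$ has a zero or pole there. (This is why I prefer $s = 1$ to $s = 0$, where $Z_\infty$ has a pole that would require a cancellation.) Consequently
$$\det\!^{\prime}\Delta \;=\; \lim_{s \to 1}\frac{\det(\Delta - s(1-s))}{s(s-1)} \;=\; Z'(1)\cdot Z_\infty(1)\cdot Z_{\text{ell}}(1)\cdot \Gamma\!\left(\tfrac{3}{2}\right)^{-n}\cdot e^{B/4 + C/2 + D},$$
so all that is left is to identify the product of the last four factors with $c_1$.

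Next I would evaluate each factor at $s = 1$. From the defining infinite product of $\Gamma_2$ stated in the preamble one reads off $\Gamma_2(1) = 1$, hence $Z_\infty(1) = (2\pi)^{|X|/(2\pi)}$. Relabeling $k \mapsto k-1$ in the elliptic product at $s = 1$ (the $k = m_j$ term becoming $\Gamma(1) = 1$ and dropping out) gives
$$Z_{\text{ell}}(1) = \prod_{j=1}^{v}\prod_{k=1}^{m_j-1}\Gamma\!\left(\frac{k}{m_j}\right)^{(2k-1-m_j)/m_j},$$
which is already the elliptic product appearing in the definition of $c_1$. Finally $\Gamma(3/2)^{-n} = 2^n \pi^{-n/2}$ is elementary.

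The remaining work is to expand $B/4 + C/2 + D$ using the explicit formulas of Theorem \ref{Determinant} and to combine it with the constants $(2\pi)^{|X|/(2\pi)}\cdot 2^n \pi^{-n/2}$. The coefficient of $\log 2\pi$ collapses to $|X|/(2\pi) - |X|/(4\pi) = |X|/(4\pi)$, producing $(2\pi)^{|X|/(4\pi)}$; the $\pi^{n/2}$ sitting inside $(n/2)\log 2\pi \subset D$ cancels the $\pi^{-n/2}$ coming from $\Gamma(3/2)^{-n}$; and the surviving powers of $2$ (from $C/2 = -(n/2)\log 2$, from $(n/2)\log 2\pi \subset D$, from $-(A/2)\log 2 \subset D$, and from $\Gamma(3/2)^{-n}$) collect to $2^{n - A/2}$. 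The remaining terms $-|X|/(8\pi) + (|X|/\pi)\zeta'(-1) + \sum_j \tfrac{m_j^2-1}{6m_j}\log m_j$ are, by the definitions of $B$ and $\mathscr{E}$, precisely $\log\mathscr{E}$. Assembling these pieces yields $c_1$ in exactly the stated form. No substantive obstacle is present; the only thing to watch is the careful bookkeeping of logarithms, powers of $2$, and powers of $\pi$.
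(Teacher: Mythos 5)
Your proposal is correct and follows essentially the same route as the paper: substitute the closed form \eqref{eq11} into \eqref{eq21}, use the simple zero of $Z(s)$ at $s=1$ to extract $Z'(1)$ in the limit $s\to 1$, and identify $c_1 = Z_\infty(1)Z_{\text{ell}}(1)\Gamma\left(\tfrac32\right)^{-n}e^{B/4+C/2+D}$ by the same evaluations ($\Gamma_2(1)=1$, reindexing the elliptic product, and the bookkeeping of powers of $2$ and $2\pi$). No gaps.
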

\begin{proof}
Since    the Selberg zeta function $Z(s)$ has a simple zero at $s=1$,  we can write \begin{align*}
Z(s)=&(s-1)\hat{Z}(s),
\end{align*}where  $\hat{Z}(1)$ is nonzero and finite.

Using \eqref{eq11} and \eqref{eq21}, we have
\begin{align*}
&s(s-1)\det\!^{\prime}\left(\Delta-s(1-s)\right)\\
=&Z_{\infty}(s)(s-1)\hat{Z}(s)Z_{\text{ell}}(s)\Gamma\left(s+\frac{1}{2}\right)^{-n}(2s-1)^{\frac{A}{2}}e^{C(s-\frac{1}{2})}e^{B\left(s-\frac{1}{2}\right)^2+D}.
\end{align*}Comparing the leading terms of both sides when $s\rightarrow 1$ give
\begin{align}\label{eq23}
\det\!^{\prime}\Delta=c_1\hat{Z}(1)= c_1Z'(1),
\end{align}
where
\begin{align*}
c_1=&Z_{\infty}(1)Z_{\text{ell}}(1)\Gamma\left( \frac{3}{2}\right)^{-n}e^{\frac{B}{4}+\frac{C}{2}+D}\\
=&(2\pi)^{\frac{|X|}{2\pi}}\prod_{j=1}^{v}\prod_{k=0}^{m_j-1}\Gamma\left(\frac{1+k}{m_j}\right)^{\frac{2k+1-m_j}{m_j}}\frac{2^n}{\pi^{\frac{n}{2}}}
e^{\frac{B}{4}+\frac{C}{2}+D}\\
=&2^{n-\frac{A}{2}}(2\pi)^{\frac{|X|}{4\pi}}\mathscr{E}\prod_{j=1}^{v}\prod_{k=1}^{m_j-1}\Gamma\left(\frac{k}{m_j}\right)^{\frac{2k-1-m_j}{m_j}},
\end{align*}with
$$\mathscr{E}=\exp\left\{\sum_{j=1}^v\frac{m_j^2-1}{6m_j}\log m_j+\frac{|X|}{2\pi}\left(2\zeta'(-1)-\frac{1}{4}\right) \right\}.$$

\end{proof}

\vspace{0.5cm}
\section{The    Ruelle zeta function }
Recall that the Ruelle zeta function of a hyperbolic Riemann surface $X$ is defined as
\begin{align*}
R(s)=\prod_{P}\left(1-p^{-s}\right).
\end{align*}
It can be expressed in terms of the Selberg zeta function:
\begin{align}\label{eqR}
R(s)=\frac{Z(s)}{Z(s+1)}.
\end{align}
The behavior of the Ruelle zeta function at $s=0$ has been of interest  \cite{Fried1986,Fried1986_3,GonPark2008,Park2009,GonPark2010,DyatlovZworski2017}. The order of singularity for compact hyperbolic surfaces  has been determined. More recently, Dyatlov and Zworski have shown that for a compact negatively curved oriented
surface, the Ruelle zeta function vanishes to the order given by the negative of the Euler characteristic at $s=0$. Here we want to derive corresponding result for hyperbolic surfaces with elliptic points.

First we consider the functional equation for the Ruelle zeta function, which generalizes the result of \cite{Gon1997} to surfaces with cusps and ramification points.
\begin{theorem}The functional equation of the Ruelle zeta function is given by
\begin{align}\label{eq15}
R(s)R(-s)=& \left(\varphi(s)\varphi(-s)\right)^{-1}\frac{(4\sin^2\pi s)^{2g-2+n}}{(4s^2-1)^n} \prod_{j=1}^v\left(\frac{\quad\sin\pi s\quad}{\di\sin\frac{\pi s}{m_j}}\right)^{2}.
\end{align}
\end{theorem}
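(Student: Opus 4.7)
The plan is to derive the Ruelle functional equation directly from the functional equation of the Selberg zeta function (Proposition \ref{functional}). Since $R(s)=Z(s)/Z(s+1)$, substituting $s\mapsto -s$ in $Z(1-s)=\varkappa(s)Z(s)$ yields $Z(s+1)=\varkappa(-s)Z(-s)$, whence
$$R(s)R(-s)=\frac{Z(s)Z(-s)}{Z(s+1)Z(1-s)}=\frac{1}{\varkappa(s)\varkappa(-s)}.$$
The whole argument thus reduces to computing $\varkappa(s)\varkappa(-s)$ and inverting. I would split the product into four groups: the scalar prefactor $(-1)^{A/2}e^{C(2s-1)}\varphi(s)$; the half-integer gamma factor $[\Gamma(3/2-s)/\Gamma(1/2+s)]^n$; the Barnes piece $\mathcal{I}(s)$ involving $\Gamma_2$; and the elliptic piece $\mathcal{E}(s)$ from \eqref{eq13}. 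The first is immediate: $A$ is even by Remark \ref{remark1} so $(-1)^A=1$, and $C=-n\log 2$ gives $e^{-2C}=4^n$, with $\varphi(s)\varphi(-s)$ carried along. For the gamma factor, $\Gamma(3/2\pm s)=(1/2\mp s)\Gamma(1/2\mp s)$ telescopes the product to $[(1-4s^2)/4]^n$, which together with the $4^n$ above gives $(1-4s^2)^n$.

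For the Barnes piece the key input is the functional relation $\Gamma_2(s)=\Gamma(s)\Gamma_2(s+1)$, which follows from the Alekseevskii-Barnes definition together with the stated pole structure (simple pole at $s=0$ with residue one). Applying it at $s$ and $-s$ yields
$$\frac{\Gamma_2(s)\Gamma_2(-s)}{\Gamma_2(1-s)\Gamma_2(1+s)}=\Gamma(s)\Gamma(-s)=-\frac{\pi}{s\sin\pi s}$$
via Euler's reflection formula. Combined with $\Gamma(1-s)\Gamma(1+s)/(\Gamma(s)\Gamma(-s))=-s^2$ and the trivial $(2\pi)^{2s-1}(2\pi)^{-2s-1}=(2\pi)^{-2}$, one obtains $\mathcal{I}(s)\mathcal{I}(-s)=[-1/(4\sin^2\pi s)]^{|X|/2\pi}$.

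The elliptic piece is the main technical obstacle. Using $\sin\frac{\pi(k-s)}{m_j}=\sin\frac{\pi(m_j-k+s)}{m_j}$, I would re-index $\mathcal{E}_j(-s)$ by $k\mapsto m_j-k$, so that after applying the boundary identity $\sin\frac{\pi(m_j+s)}{m_j}=-\sin(\pi s/m_j)$ its factors align with those of $\mathcal{E}_j(s)$. The paired exponents then collapse to $-2/m_j$ on each $\sin\frac{\pi(s+k)}{m_j}$ for $k=1,\ldots,m_j-1$, while the two boundary terms combine into $(-1)^{(m_j-1)/m_j}\sin^{2(m_j-1)/m_j}(\pi s/m_j)$. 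Applying the classical identity $\prod_{k=0}^{m-1}\sin\frac{\pi(s+k)}{m}=2^{1-m}\sin\pi s$ to the middle product then gives
$$\mathcal{E}_j(s)\mathcal{E}_j(-s)=(-4)^{(m_j-1)/m_j}\frac{\sin^2(\pi s/m_j)}{(\sin\pi s)^{2/m_j}}.$$
Multiplying over $j$ and using $|X|/2\pi=2g-2+n+V$ with $V=\sum_j(1-1/m_j)$ and $\sum_j 1/m_j=v-V$, the powers of $4$ combine to $4^{-(2g-2+n)}$, the exponents of $\sin\pi s$ collapse to $-2(2g-2+n)-2v$, and the non-integer powers of $-1$ in the Barnes and elliptic pieces cancel (as they must, since $\varkappa=Z(1-s)/Z(s)$ is single-valued), leaving a $(-1)^n$ that turns $(1-4s^2)^n$ into $(4s^2-1)^n$. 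Assembling everything yields
$$\varkappa(s)\varkappa(-s)=\varphi(s)\varphi(-s)\,\frac{(4s^2-1)^n}{(4\sin^2\pi s)^{2g-2+n}}\prod_{j=1}^v\left(\frac{\sin(\pi s/m_j)}{\sin\pi s}\right)^2,$$
and inverting produces the theorem.
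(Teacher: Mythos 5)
Your argument is correct and leads to the stated identity, but it takes a slightly different algebraic route than the paper. Both proofs reduce the problem to the Selberg functional equation, yet you evaluate $R(s)R(-s)=1/(\varkappa(s)\varkappa(-s))$, whereas the paper writes $Z(-s)=\varkappa(s+1)Z(1+s)$ and computes $R(s)R(-s)=\varkappa(s+1)/\varkappa(s)$; the two are reconciled by the reciprocity $\varkappa(s)\varkappa(1-s)=1$, and indeed each of your intermediate factors, e.g.\ $\mathcal{I}(s)\mathcal{I}(-s)=\bigl(-1/(4\sin^2\pi s)\bigr)^{|X|/2\pi}$ and $\mathcal{E}_j(s)\mathcal{E}_j(-s)=(-4)^{(m_j-1)/m_j}\sin^2(\pi s/m_j)(\sin\pi s)^{-2/m_j}$, is exactly the reciprocal of the paper's corresponding factor. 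The practical difference lies in which special-function identities are invoked: the paper's shift-by-one quotient requires $\Gamma_2(s)=\Gamma(s)\Gamma_2(s+1)$, the Gauss multiplication formula \eqref{eq24} applied to $Z_{\text{ell}}(s+1)/Z_{\text{ell}}(s)$, and the relation $\varphi(1+s)=\varphi(-s)^{-1}$, while your $s\leftrightarrow -s$ pairing lets you work from the sine form \eqref{eq13} of the elliptic factor, collapse it with the reflection formula and the classical product $\prod_{k=0}^{m-1}\sin\frac{\pi(s+k)}{m}=2^{1-m}\sin\pi s$ (itself equivalent to \eqref{eq24} plus reflection), and keep $\varphi(s)\varphi(-s)$ intact with no extra identity. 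Your final bookkeeping with $|X|/2\pi=2g-2+n+\sum_j(1-1/m_j)$ checks out, including the net sign $(-1)^{-(2g-2+n)}=(-1)^n$ converting $(1-4s^2)^n$ into $(4s^2-1)^n$. The one point you pass over lightly --- the cancellation of the fractional powers of $-1$ coming from $(-1)^{|X|/2\pi}$ and $\prod_j(-1)^{(m_j-1)/m_j}$ --- is treated with no greater care in the paper, and your appeal to single-valuedness of $\varkappa(s)\varkappa(-s)$ as a ratio of values of $Z$ is a legitimate way to justify it.
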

\begin{proof}
From the functional equation for the Selberg zeta function, we have
\begin{align*}
Z(1-s)=&\varkappa(s)Z(s),\\
Z(-s)=&\varkappa(s+1)Z(1+s).
\end{align*}Taking quotient of the first expression to the second one, we find that
\begin{align*}
R(s)R(-s)=&\frac{\varkappa(s+1)}{\varkappa(s)}.
\end{align*}By \eqref{eq13}, we have
\begin{align*}
\frac{\varkappa(s+1)}{\varkappa(s)}=&\frac{\varphi(1+s)}{\varphi(s)}\frac{Z_{\infty}(s+1)}{Z_{\infty}(s)}\frac{Z_{\infty}(1-s)}{Z_{\infty}(-s)}
\frac{Z_{\text{ell}}(s+1)}{Z_{\text{ell}}(s)}\frac{Z_{\text{ell}}(1-s)}{Z_{\text{ell}}(-s)}e^{2C}\\&\times
\left[\frac{\di\Gamma\left(\frac{1}{2}-s\right)\Gamma\left(s+\frac{1}{2}\right)}{\di\Gamma\left(s+\frac{3}{2}\right)\Gamma\left(\frac{3}{2}-s\right)}\right]^n.
\end{align*}Using the functional equation
$$\Gamma_2(s)=\Gamma(s)\Gamma_2(s+1),$$ we find that
\begin{align*}
\frac{Z_{\infty}(s+1)}{Z_{\infty}(s)}\frac{Z_{\infty}(1-s)}{Z_{\infty}(-s)}
=&\left[\frac{(2\pi)^2}{\Gamma(s)\Gamma(s+1)\Gamma(-s)\Gamma(1-s)}\right]^{\frac{|X|}{2\pi}}\\
=&\left(-4\sin^2\pi s\right)^{\frac{|X|}{2\pi}}.
\end{align*}On the other hand,
\begin{align*}
\frac{\di\Gamma\left(\frac{1}{2}-s\right)\Gamma\left(s+\frac{1}{2}\right)}{\di\Gamma\left(s+\frac{3}{2}\right)\Gamma\left(\frac{3}{2}-s\right)}
=&\frac{1}{\di\left(\frac{1}{2}+s\right)
\left(\frac{1}{2}-s\right)}=\frac{4}{1-4s^2}.
\end{align*}Finally,
\begin{align*}
\frac{Z_{\text{ell}}(s+1)}{Z_{\text{ell}}(s)}=&\prod_{j=1}^{v}
 \frac{\di\Gamma\left(\frac{s}{m_j}\right)^{\frac{m_j-1}{m_j}}\Gamma\left(\frac{s+m_j}{m_j}\right)^{\frac{m_j-1}{m_j}}}{\di\prod_{k=1}^{m_j-1}
 \Gamma\left(\frac{s+k}{m_j}\right)^{\frac{2}{m_j}}}.
\end{align*}Using the identity (see \cite{GradshteynRyzhik})
\begin{align}\label{eq24}
\Gamma(s)=(2\pi)^{\frac{1-m}{2}}m^{s-\frac{1}{2}}\prod_{k=0}^{m-1}\Gamma\left(\frac{s+k}{m}\right),
\end{align}
we find that
\begin{align*}
\frac{Z_{\text{ell}}(s+1)}{Z_{\text{ell}}(s)}=&\prod_{j=1}^{v}
\frac{\di\Gamma\left(\frac{s}{m_j}\right)^{\frac{m_j+1}{m_j}}\Gamma\left(\frac{s+m_j}{m_j}\right)^{\frac{m_j-1}{m_j}}}
{\di (2\pi)^{\frac{m_j-1}{m_j}}m_j^{\frac{1-2s}{m_j}}\Gamma(s)^{\frac{2}{m_j}}}.
\end{align*}
Hence,
\begin{align*}
&\frac{Z_{\text{ell}}(s+1)}{Z_{\text{ell}}(s)}\frac{Z_{\text{ell}}(1-s)}{Z_{\text{ell}}(-s)}\\=& \prod_{j=1}^{r}
\frac{\di\Gamma\left(\frac{s}{m_j}\right)^{\frac{m_j+1}{m_j}}\Gamma\left(-\frac{s}{m_j}\right)^{\frac{m_j+1}{m_j}}\Gamma\left(\frac{s+m_j}{m_j}\right)^{\frac{m_j-1}{m_j}}
\Gamma\left(\frac{-s+m_j}{m_j}\right)^{\frac{m_j-1}{m_j}}}
{\di (2\pi)^{\frac{2m_j-2}{m_j}}m_j^{\frac{2}{m_j}}\Gamma(s)^{\frac{2}{m_j}}\Gamma(-s)^{\frac{2}{m_j}}}\\
=&\prod_{j=1}^v\frac{\di \left(\sin \pi s\right)^{\frac{2}{m_j}}}{\di (-4)^{\frac{m_j-1}{m_j}}\sin^2\frac{\pi s}{m_j}}.
\end{align*}
Gathering the terms, and using \eqref{eq17} and $\varphi(1+s)=\varphi(-s)^{-1}$,
we find that
\begin{align*}
R(s)R(-s)=& \left(\varphi(s)\varphi(-s)\right)^{-1}\frac{(4\sin^2\pi s)^{2g-2+n}}{(4s^2-1)^n} \prod_{j=1}^v\left(\frac{\quad\sin\pi s\quad}{\di\sin\frac{\pi s}{m_j}}\right)^{2}.
\end{align*}
\end{proof}
Let $$\varphi(s)=s^{n_0}\tilde{\varphi}(s),$$ where $\tilde{\varphi}(0)$ is   nonzero and finite.
As $s\rightarrow 0$,
$$\tilde{\varphi}(s)\sim s^{n_0}\tilde{\varphi}(0)+\text{higher order terms},$$
$$\sin \pi s\sim \pi s+ \text{higher order terms}.$$
This shows that
$$R(s)R(-s)\sim (-s^2)^{-n_0}(-4\pi^2 s^2)^{2g-2+n}\tilde{\varphi}(0)^{-2}\prod_{j=1}^v m_j^2\;+ \text{higher order terms}.$$
Hence,
\begin{align*}
\lim_{s\rightarrow 0}\frac{R(s)}{s^{2g-2+n-n_0}}=\pm (2\pi)^{2g-2+n}\tilde{\varphi}(0)^{-1}\prod_{j=1}^v m_j.
\end{align*}This only determine the leading term up to a plus or minus sign. To determine this plus or minus sign, we use a different approach.

\vspace{0.5cm}
\begin{theorem}\label{Main}
If $X$ is a cofinite Riemann surface of type  $(g;n;m_1, m_2, \ldots, m_v)$, then as $s\rightarrow 0$, the leading behavior of the Ruelle zeta function is given by
\begin{align*}
\lim_{s\rightarrow 0}\frac{R(s)}{s^{2g-2+n-n_0}}=(-1)^{\frac{A}{2}+1}(2\pi)^{2g-2+n } \tilde{\varphi}(0)^{-1}\prod_{j=1}^v m_j,
\end{align*}where $n_0$ is the order of $\varphi(s)$ at $s=0$, and
$$\tilde{\varphi}(s)=\frac{\varphi(s)}{s^{n_0}}.$$
\end{theorem}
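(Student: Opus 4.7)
The plan is to compute the leading coefficient of $R(s)$ at $s=0$ directly from the identity $R(s)=Z(s)/Z(s+1)$, using the explicit functional equation of $Z(s)$ from Proposition~\ref{functional} to pin down both numerator and denominator. This approach avoids the sign ambiguity of the $R(s)R(-s)$ derivation sketched just before the statement, because the $R=Z/Z(\,\cdot\,+1)$ identity provides a signed leading term rather than merely its absolute value.

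Since $Z$ has a simple zero at $s=1$ by Proposition~\ref{Pzero}, near $s=0$ both $Z(s+1)=Z'(1)s+O(s^2)$ and $Z(1-s)=-Z'(1)s+O(s^2)$. Applying $Z(1-s)=\varkappa(s)Z(s)$, I would write
\begin{align*}
R(s)=\frac{Z(s)}{Z(s+1)}=\frac{Z(1-s)}{\varkappa(s)\,Z(s+1)}=\frac{-1}{\varkappa(s)}\bigl(1+O(s)\bigr),
\end{align*}
so $Z'(1)$ cancels out entirely and the problem reduces to finding the leading behavior of $\varkappa(s)$ at $s=0$. I would then expand each factor in Proposition~\ref{functional} as $s\to 0$: $\varphi(s)\sim s^{n_0}\tilde\varphi(0)$; the double-gamma block, using $\Gamma_2(s)\sim 1/s$ and $\Gamma_2(1)=1$, gives $(2\pi s)^{-|X|/(2\pi)}$; the ratio $\bigl(\Gamma(\tfrac{3}{2}-s)/\Gamma(s+\tfrac{1}{2})\bigr)^{n}$ evaluates to $2^{-n}$, which cancels against $e^{C(2s-1)}\to 2^{n}$; and in the sine product only the $k=0$ terms are singular, each contributing $(\pi s/m_j)^{(m_j-1)/m_j}$, while the $k\ge 1$ terms form a finite constant.

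The one substantive manipulation is simplifying $\prod_{k=1}^{m-1}\sin(\pi k/m)^{(m-2k-1)/m}$. Pairing $k$ with $m-k$ and using $\sin(\pi k/m)=\sin(\pi(m-k)/m)$, the combined exponent on each distinct sine value works out to $-2/m$, so the product collapses to $\bigl(\prod_{k=1}^{m-1}\sin(\pi k/m)\bigr)^{-1/m}=(m/2^{m-1})^{-1/m}$ by the classical identity. Feeding this back, the powers of $2$ and $\pi$ regroup as $(2\pi)^{\sum(1-1/m_j)}$ and telescope against $(2\pi)^{-|X|/(2\pi)}$ via $|X|/(2\pi)=2g-2+n+\sum(1-1/m_j)$, leaving $(2\pi)^{-(2g-2+n)}$; the factors $m_j^{-(m_j-1)/m_j}$ from the $k=0$ contribution and $m_j^{-1/m_j}$ from the sine identity fuse into $m_j^{-1}$. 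Thus
\begin{align*}
\varkappa(s)\sim (-1)^{A/2}\,\tilde\varphi(0)\,(2\pi)^{-(2g-2+n)}\prod_{j=1}^{v}m_j^{-1}\cdot s^{\,n_0-(2g-2+n)},
\end{align*}
and since $(-1)^{-A/2}=(-1)^{A/2}$, the limit $-1/\bigl(\varkappa(s)\,s^{2g-2+n-n_0}\bigr)$ produces exactly $(-1)^{A/2+1}(2\pi)^{2g-2+n}\tilde\varphi(0)^{-1}\prod_{j=1}^{v}m_j$.

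The main bookkeeping obstacle is the sine-product simplification above: one must get the sign of the combined exponent right and verify that $m_j$ even and $m_j$ odd reduce to the same expression (the $k=m_j/2$ middle term in the even case carries exponent $-1/m_j$ on $\sin(\pi/2)=1$, which is invisible). All other steps are routine algebraic cancellations guided by $|X|/(2\pi)=2g-2+n+\sum(1-1/m_j)$.
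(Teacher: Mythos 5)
Your proposal is correct and follows essentially the same route as the paper: both reduce $R(s)=Z(s)/Z(s+1)$ to $-1/\varkappa(s)\,(1+O(s))$ via the functional equation (so that $Z'(1)$ cancels) and then extract the leading behavior of $\varkappa(s)$ at $s=0$. The only difference is cosmetic bookkeeping in the elliptic factor --- you use the sine form of $Z_{\text{ell}}(s)/Z_{\text{ell}}(1-s)$ together with $\prod_{k=1}^{m-1}\sin(\pi k/m)=m/2^{m-1}$, whereas the paper uses the Gamma-function form and the Gauss multiplication formula at $s=1$; both yield the same factor $m_j^{-1}(2\pi s)^{1-1/m_j}$.
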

\begin{proof}
Using \eqref{eqR}, we find that
\begin{align*}
\lim_{s\rightarrow 0}\frac{R(s)}{s^{2g-2+n-n_0}}=&\lim_{s\rightarrow 0}\frac{Z(s)}{s^{2g-1+n-n_0}}\frac{s}{Z(s+1)}\\
=&-\lim_{s\rightarrow 0}\frac{Z(s)}{s^{2g-1+n-n_0}}\frac{s}{Z(1-s)}\\
=&-\lim_{s\rightarrow 0}\frac{1}{s^{2g-2+n-n_0}\chi(s)}.
\end{align*}Now as $s\rightarrow 0$, one obtains from the proof of Proposition \ref{functional} that
\begin{align*}
\chi(s)\sim s^{n_0}\tilde{\varphi}(0)(-1)^{\frac{A}{2}}\left(\frac{1}{2\pi s}\right)^{\frac{|X|}{2\pi}}\prod_{j=1}^v \left(\frac{m_j}{s}\right)^{\frac{1-m_j}{m_j}}\prod_{k=1}^{m_j-1}\Gamma\left(\frac{k}{m_j}\right)^{\frac{2k+1-m_j}{m_j}}
\prod_{k=0}^{m_j-1}\Gamma\left(\frac{k+1}{m_j}\right)^{-\frac{2k+1-m_j}{m_j}}.
\end{align*}
Hence,
\begin{align*}
\lim_{s\rightarrow 0} \left(s^{2g-2+n-n_0}\chi(s)\right)=& \tilde{\varphi}(0)(-1)^{\frac{A}{2}}
\left(\frac{1}{2\pi }\right)^{\frac{|X|}{2\pi}}\prod_{j=1}^v m_j^{\frac{1-m_j}{m_j}}\prod_{k=1}^{m_j}\Gamma\left(\frac{k}{m_j}\right)^{\frac{2}{m_j}}.
\end{align*}
Putting $s=1$ in \eqref{eq24} give
$$ \prod_{k=1}^{m_j}\Gamma\left(\frac{k}{m_j}\right)^{\frac{2}{m_j}}=(2\pi)^{\frac{m_j-1}{m_j}}m_j^{-\frac{1}{m_j}}.$$
Hence,
\begin{align*}
\lim_{s\rightarrow 0}\frac{R(s)}{s^{2g-2+n-n_0}}=(-1)^{\frac{A}{2}+1}(2\pi)^{2g-2+n } \tilde{\varphi}(0)^{-1}\prod_{j=1}^v m_j.
\end{align*}
\end{proof}

We would like to thank J. Friedman for suggesting us to prove Theorem \ref{Main} using the functional equation of the Selberg zeta function. We would also like to remark that Fried \cite{Fried1986_3} has considered the leading term of the Ruelle zeta function at $s=0$ up to the plus minus sign, for a cocompact hyperbolic surface, using the functional equation. He did not obtain the term which contains the product of the ramification indices.

\vspace{0.5cm}

\begin{remark}
Recall from Remark \ref{remark1} that $\di\frac{A}{2}$ is the multiplicity of the eigenvalue $-1$ of the Hermittian and unitary matrix $\di\Phi\left(\frac{1}{2}\right)$. Therefore, the sign of the leading coefficient of $R(s)$ tells us whether the matrix $\di\Phi\left(\frac{1}{2}\right)$ has an even or odd number of eigenvalue $-1$. This is equivalent to
$$(-1)^{\frac{A}{2}}=\det\Phi\left(\frac{1}{2}\right)=\varphi\left(\frac{1}{2}\right).$$
\end{remark}

\vspace{0.5cm}

It is well known that for $1\leq i, j\leq n$, $\varphi_{ij}(s)$ has at most   a simple pole at $s=1$. Hence, $\varphi(s)$ has at most a pole of order $n$ at $s=1$. This implies that $n_0\leq n$, so the order of the Ruelle zeta function at $s=0$ is $2g-2+n-n_0\geq 2g-2$. Therefore, $R(s)$ has at most a pole of order 2 at $s=0$.

For  the modular group $\text{PSL}\,(2,\mathbb{Z})$,
 the surface $  \text{PSL}\,(2,\mathbb{Z})\backslash\mathbb{H}$ is a surface of type $(0; 1; 2, 3)$ and we know that
 $$\varphi(s)=\sqrt{\pi}\frac{\Gamma\left(s-\frac{1}{2}\right)}{\Gamma(s)}\frac{\zeta(2s-1)}{\zeta(2s)}.$$ Hence, $\varphi(1/2)=-1$,
 $n_0=1$ and $\tilde{\varphi}(0)=\pi/3$. Hence, the Ruelle zeta function $R(s)$ has a pole of order $2$ at $s=0$ and
 \begin{align*}
 \lim_{s\rightarrow 0}\left(s^2R(s)\right)=\frac{9}{\pi^2}.
 \end{align*}

 It is interesting that the order of vanishing or singularity of the Ruelle zeta function $R(s)$ at $s=0$ captures  information of the underlying hyperbolic surface. Its leading coefficient contains the information about  the orders of the elliptic generators. One can make an analogy of this result to the Birch and Swinnerton-Dyer conjecture, which conjectures the leading term of the Hasse-Weil $L$-function of an elliptic curve $E$ at $s=1$ to be related to the rank of the abelian group of points of $E$ and other arithmetic data of $E$.

\vspace{0.5cm} Let $Z(s)=s^{2g-1+n-n_0}\tilde{Z}(s)$, where $\tilde{Z}(0)$ is nonzero and finite.
From the proof of Theorem \ref{Main}, we deduce that
\begin{align*}
\frac{Z'(1)}{\tilde{Z}(0)}=&-\lim_{s\rightarrow 0}\left(s^{2g-2+n-n_0}\chi(s)\right)=(-1)^{\frac{A}{2}+1}(2\pi)^{-2g+2-n } \tilde{\varphi}(0)\prod_{j=1}^v m_j^{-1}.
\end{align*}
Together with Theorem \ref{Main2}, we find that

\begin{theorem}\label{Main3}
If $X$ is a cofinite Riemann surface of type  $(g;n;m_1, m_2, \ldots, m_v)$, then
\begin{align}\label{eqnew}
\det\!^{\prime}\Delta= c_0\tilde{Z}(0),
\end{align}
where
\begin{align*}
c_0=&(-1)^{\frac{A}{2}+1} 2^{n-\frac{A}{2}}(2\pi)^{-\frac{|X|}{4\pi}}\mathscr{E} \tilde{\varphi}(0) \prod_{j=1}^v\left(\frac{1}{m_j}\right)^{\frac{m_j-1}{m_j}}\prod_{j=1}^v\prod_{k=1}^{m_j-1}\Gamma\left(\frac{k}{m_j}\right)^{\frac{2k+1-m_j}{m_j}},
\end{align*}
and
$$\mathscr{E}=\exp\left\{\sum_{j=1}^v\frac{m_j^2-1}{6m_j}\log m_j+\frac{|X|}{2\pi}\left(2\zeta'(-1)-\frac{1}{4}\right) \right\}.$$
\end{theorem}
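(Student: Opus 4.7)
The plan is to combine Theorem \ref{Main2} with the functional equation of the Selberg zeta function (Proposition \ref{functional}) to re-express $\det'\Delta$ in terms of $\tilde Z(0)$ rather than $Z'(1)$. The bridge between the two is a constant which has, in essence, already been evaluated inside the proof of Theorem \ref{Main}.

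By Proposition \ref{Pzero} we may write $Z(s)=s^{2g-1+n-n_0}\tilde Z(s)$ with $\tilde Z(0)$ finite and nonzero, and $Z(1-s)=-s\,Z'(1)+O(s^2)$ near $s=0$ since $Z(s)$ has a simple zero at $s=1$. Substituting these expansions into the functional equation $Z(1-s)=\varkappa(s)Z(s)$ and comparing leading orders as $s\to 0$ yields
\begin{align*}
\frac{Z'(1)}{\tilde Z(0)}=-\lim_{s\to 0}s^{\,2g-2+n-n_0}\varkappa(s).
\end{align*}
The asymptotic of $\varkappa(s)$ as $s\to 0$ was already computed in the proof of Theorem \ref{Main}, where the exponents of $s$ coming from the factors $(2\pi s)^{-|X|/(2\pi)}$ and $\prod_j(m_j/s)^{(1-m_j)/m_j}$ combine with $s^{n_0}$ to produce exactly $s^{n_0-(2g-2+n)}$. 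After applying identity \eqref{eq24} with $s=1$, which simplifies the remaining product of gamma values, one obtains
\begin{align*}
\frac{Z'(1)}{\tilde Z(0)}=(-1)^{\frac{A}{2}+1}(2\pi)^{-(2g-2+n)}\,\tilde\varphi(0)\prod_{j=1}^{v}m_j^{-1}.
\end{align*}

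Finally, Theorem \ref{Main2} asserts $\det'\Delta=c_1 Z'(1)$, so $\det'\Delta=c_0\tilde Z(0)$ with $c_0=c_1\cdot Z'(1)/\tilde Z(0)$. The exponent of $2\pi$ combines as $|X|/(4\pi)-(2g-2+n)=-|X|/(4\pi)+\sum_j(m_j-1)/m_j$, using \eqref{eq17}. To reconcile the product $\prod_{j,k}\Gamma(k/m_j)^{(2k-1-m_j)/m_j}$ appearing in $c_1$ with the target product $\prod_{j,k}\Gamma(k/m_j)^{(2k+1-m_j)/m_j}$, one notes that the exponent difference is uniformly $2/m_j$, and the discrepancy is absorbed by the identity $\prod_{k=1}^{m_j-1}\Gamma(k/m_j)^{2/m_j}=(2\pi)^{(m_j-1)/m_j}m_j^{-1/m_j}$, which is the $s=1$ case of \eqref{eq24}. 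Combining these matches exactly the stated form of $c_0$.

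The main obstacle is strictly bookkeeping: tracking the distribution of $(2\pi)$-powers, $m_j$-powers, and gamma-value products across the three inputs (the constant $c_1$, the asymptotic of $\varkappa(s)$, and the Gauss multiplication identity) so that everything lands on the precise exponents displayed in the statement. No new analytic input is required beyond what Theorems \ref{Main} and \ref{Main2} already supply; the argument is a clean algebraic repackaging of the relation $\det'\Delta=c_1 Z'(1)$ via the functional equation.
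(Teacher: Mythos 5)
Your proposal is correct and follows essentially the same route as the paper: the author likewise extracts $Z'(1)/\tilde{Z}(0)=-\lim_{s\to 0}s^{2g-2+n-n_0}\varkappa(s)=(-1)^{\frac{A}{2}+1}(2\pi)^{-(2g-2+n)}\tilde{\varphi}(0)\prod_j m_j^{-1}$ from the computation already done in the proof of Theorem \ref{Main}, and then multiplies by the constant $c_1$ from Theorem \ref{Main2}. Your bookkeeping of the $(2\pi)$-powers, the $m_j$-powers, and the shift of the gamma exponents via the $s=1$ case of \eqref{eq24} reproduces the stated $c_0$ exactly.
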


 \vspace{0.5cm}
Finally, we consider the order of the Ruelle zeta function $R(s)$ at other integers.
\begin{theorem}
Let $X$ be a cofinite Riemann surface of type  $(g;n;m_1, m_2, \ldots, m_v)$, and let $R(s)$ be its Ruelle zeta function.
\begin{enumerate}
\item[(a)] $R(s)$ has a simple zero at $s=1$.
\item[(b)] $R(s)$ has a zero of order $2(2g-2+n+v)+n_0-1$ at $s=-1$.
\item[(c)] For $k=2, 3, 4, \ldots$, the order of $R(s)$ at $s=k$ is 0.
\item[(d)] For $k=2, 3, 4, \ldots$, the order of $R(s)$ at $s=-k$ is
\begin{align*}
 o_k= 2\left[2g-2+n+v -\sum_{j=1}^v\varkappa_{j}(k)\right],
\end{align*}
where $$\varkappa_j(k)=\begin{cases} 1,\quad &\text{if}\;\;m_j\mid k\\0, &\text{otherwise}\end{cases}$$$o_k$ is an even integer not smaller than $-4$.
\end{enumerate}
\end{theorem}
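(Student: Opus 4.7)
\medskip
\noindent\textbf{Proof proposal.} The plan is to reduce everything to Proposition \ref{Pzero} and Theorem \ref{negativezero} via the identity $R(s) = Z(s)/Z(s+1)$. Since the Euler product \eqref{eq18} shows $Z(s)$ is holomorphic and nonvanishing on $\operatorname{Re} s > 1$, at any integer $s=k$ with $k \geq 2$ both $Z(k)$ and $Z(k+1)$ are nonzero and finite, giving part (c) immediately. Part (a) follows from Proposition \ref{Pzero}: $Z(s)$ has a simple zero at $s=1$ while $Z(2) \neq 0$, so the quotient has a simple zero at $s=1$.

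For parts (b) and (d), the idea is to read off the orders of $Z(-k)$ and $Z(-k+1)$ from Theorem \ref{negativezero}(b) (using Proposition \ref{Pzero} when $-k+1 = 0$) and subtract. For part (b), the order of $Z(s)$ at $s=-1$ is $3(2g-2+n) + 2\sum_{j=1}^{v}\bigl(1 - \lfloor 1/m_j\rfloor\bigr) = 3(2g-2+n) + 2v$ (since each $m_j \geq 2$), while the order of $Z(s)$ at $s=0$ is $2g-1+n-n_0$ by Proposition \ref{Pzero}. Subtracting yields $2(2g-2+n+v) + n_0 - 1$, as required.

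For part (d) with $k \geq 2$, both $-k$ and $1-k$ are negative integers, so Theorem \ref{negativezero}(b) applies on both sides. The order of $R(s)$ at $s = -k$ equals
\begin{align*}
o_k &= (2k{+}1)(2g{-}2{+}n) - (2k{-}1)(2g{-}2{+}n) \\
&\quad + 2\sum_{j=1}^{v}\Bigl[\bigl(k - \lfloor k/m_j\rfloor\bigr) - \bigl((k{-}1) - \lfloor (k{-}1)/m_j\rfloor\bigr)\Bigr] \\
&= 2(2g{-}2{+}n) + 2v - 2\sum_{j=1}^{v}\bigl(\lfloor k/m_j\rfloor - \lfloor (k{-}1)/m_j\rfloor\bigr).
\end{align*}
The elementary observation $\lfloor k/m_j\rfloor - \lfloor (k-1)/m_j\rfloor = \varkappa_j(k)$ then gives the stated formula.

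The only nontrivial remaining point is the lower bound $o_k \geq -4$. Since $\sum_j \varkappa_j(k) \leq v$, one has $o_k \geq 2(2g-2+n)$, and it suffices to observe that $2g-2+n \geq -2$ for any cofinite Fuchsian group (the finite-area condition \eqref{eq17} forces this; in the borderline case $g=0$, $n=0$ one still has $v \geq 3$ from positivity of area, so $2g-2+n=-2$ occurs only when $n=0$). The main obstacle is purely bookkeeping: care must be taken that the substitution $k \mapsto k-1$ in Theorem \ref{negativezero}(b) is legitimate (i.e.\ $k-1 \geq 1$, which is exactly why parts (b) and (d) are separated), and that the floor-function identity is handled correctly at the transition $k=1$. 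Neither presents a real difficulty.
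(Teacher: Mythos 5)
Your argument is correct, and for parts (a), (b) and (c) it coincides with the paper's proof: the paper likewise combines $R(s)=Z(s)/Z(s+1)$ with Proposition \ref{Pzero}, Theorem \ref{negativezero} and the nonvanishing of $Z$ on $\operatorname{Re} s>1$. The only genuine divergence is in part (d). The paper does \emph{not} difference the orders of $Z$ at $-k$ and $-(k-1)$; instead it reads the order of $R(s)$ at $s=-k$ off the right-hand side of the functional equation \eqref{eq15} evaluated near $s=k$, where $R(k)$ is known to be finite and nonzero by part (c). Your route — applying Theorem \ref{negativezero}(b) at both $-k$ and $1-k$ and using $\lfloor k/m_j\rfloor-\lfloor (k-1)/m_j\rfloor=\varkappa_j(k)$ — gives the same formula and is arguably more self-contained: it stays entirely within the already-established order counts for $Z$ and does not require the (implicit, in the paper) fact that $\varphi(s)\varphi(-s)$ is regular and nonzero at $s=k\geq 2$. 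The paper's route is shorter on the page and makes the symmetry $s\mapsto -s$ visible, which is what yields the remark that $R$ has no zeros or poles at non-integer $s<-1$. Two small polish points: in part (b) you should add, as the paper does, the observation that $2(2g-2+n+v)+n_0-1\geq 1$ (using $2g-2+n+v\geq 1$ and $n_0\geq 0$) so that the order is genuinely a \emph{zero}; and in the $o_k\geq -4$ discussion the aside about $v\geq 3$ is unnecessary — the bound $2g-2+n\geq -2$ already follows from $g,n\geq 0$, exactly as the paper states.
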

\begin{proof}
Using \eqref{eqR}, Proposition \ref{Pzero}  and the fact that $Z(s)$ is regular at $s=2$, we immediately obtain that $R(s)$ has a simple zero at $s=1$.  Theorem \ref{negativezero} says that $Z(s)$ has a zero of order
$$3(2g-2+n)+2v$$at $s=-1$, and Proposition \ref{Pzero} says the order of $Z(s)$ at $s=0$ is $2g-1+n-n_0$. Hence, the order of $R(s)$ at $s=-1$ is
$$3(2g-2+n)+2v-(2g-1+n-n_0)=2(2g-2+n+v)+n_0-1.$$Since $2g-2+n+v>0$, we find that $2g-2+n+v\geq 1$ and hence $R(s)$ has a zero of order $2(2g-2+n+v)+n_0-1$ at $s=-1$.

For $k=2, 3, 4, \ldots$,  since $Z(s)$ and $Z(s+1)$ are both regular and nonzero at $s=k$, so is $R(s)$. The functional equation \eqref{eq15} implies that the order of $R(s)$ at $s=-k$ is
\begin{align*}
 o_k= 2\left[2g-2+n+v -\sum_{j=1}^v\varkappa_{j}(k)\right],
\end{align*}
where $$\varkappa_j(k)=\begin{cases} 1,\quad &\text{if}\;\;m_j\mid k\\0, &\text{otherwise}\end{cases}.$$Notice that $o_k$ is an even integer and
\begin{align*}
2(2g-2+n)\leq  o_k\leq 2(2g-2+n+v).
\end{align*}Since $g, n\geq 0$, the minimum possible value  of $o_k$  is $-4$, and this can happen for surfaces with $g=0$, $n=0$ when $k$ is a common multiple of $m_1$, $m_2$, $\ldots$, $m_v$.

 \end{proof}

\begin{remark}
Since $R(s)$ is regular and nonzero when $s>1$, the functional equation \eqref{eq15} implies that $R(s)$ does not have zeros or poles when $s<-1$ and $s$ is not an integer.
\end{remark}

In this work, we only consider two- (real) dimensional manifolds with conical singularities. This has important applications especially to number theory since the Riemann surface $\Gamma\backslash\mathbb{H}$ with $\Gamma$ a congruence subgroup, is of this type. In principal, one can also consider higher dimensional real hyperbolic manifolds, generalizing the results in \cite{Park2009,GonPark2010} to orbifolds.

\vspace{1cm}


\begin{thebibliography}{10}
\bibitem{Alekseevskii1889} V. P. Alekseevskii, \emph{On functions similar to the gamma function}, Communications
and Proceedings of the Kharkov Mathematical Society \textbf{1},
 169--238, 1889. (Russian)

\bibitem{Barnes} E. W. Barnes, \emph{The theory of the $G$-function}, Q. J. Math. \textbf{31}, 264--314, 1900.
\bibitem{DHokerPhong1986} E. D'Hoker and D. H. Phong, \emph{On determinants of Laplacians on Riemann surfaces}, Comm. Math. Phys. \textbf{104}, 537--545, 1986.
\bibitem{DyatlovZworski2017} S. Dyatlov and M. Zworski, \emph{Ruelle zeta function at zero for surfaces}, Invent. Math. \textbf{210}, 211--229, 2017.
\bibitem{Efrat1988} I. Efrat, \emph{Determinants of Laplacians on surfaces of finite volume}, Comm. Math. Phys. \textbf{119}, 443--451, 1988. Erratum: Comm. Math. Phys. \textbf{138}, 607, 1991.
\bibitem{Fischer1987} J. Fischer, \emph{An approach to the Selberg trace formula via the Selberg zeta function}, Lecture Notes in Mathematics 1253, Springer, 1987.
\bibitem{Fried1986_1} D. Fried, \emph{The zeta functions of Ruelle and Selberg. I},  Ann. Sci. EcoleNorm. Sup. \textbf{19}, 491--517, 1986.
\bibitem{Fried1986} D. Fried, \emph{Analytic torsion and closed geodesics on hyperbolic manifolds}, Invent. Math. \textbf{84}, 523--540, 1986.
\bibitem{Fried1986_3} D. Fried, \emph{Fuchsian groups and Reidemeister torsion}, in: The Selberg Trace Formula and Related Topics, Brunswick,
Maine, 1984, in: Contemp. Math., vol. \textbf{53}, Amer. Math. Soc., Providence, RI, 1986, pp. 141--163.
\bibitem{Gon1997} Y. Gon, \emph{Gamma factors of Selberg zeta functions and functional equation of Ruelle zeta functions}, Math. Ann. \textbf{308}, 251--278, 1997.
\bibitem{Gon2014} Y. Gon, \emph{Differences of the Selberg trace formula and Selberg type zeta functions for Hilbert modular surfaces}, J. Number Theory \textbf{147}, 396--453, 2015.
\bibitem{GonPark2008} Y. Gon and J. Park, \emph{Ruelle zeta function for odd dimensional hyperbolic manifolds with cusps}, Proc. Japan Acad. Ser. A
Math. Sci. \textbf{84}, 1--4, 2008.

\bibitem{GonPark2010} Y. Gon and I. Park, \emph{The zeta functions of Ruelle and Selberg for hyperbolic manifolds with cusps}, Math. Ann. \textbf{346}, 719--767, 2010.
\bibitem{Gong1995} D. Gong, \emph{Zeta-determinant and torsion functions on Riemann surfaces of finite volume}, Manuscripta Math. \textbf{86}, 435--454, 1995.
\bibitem{GradshteynRyzhik} I. S. Gradshteyn and L. M. Ryzhik, \emph{Table of integrals, series and products}, Academic Press, 2000.
\bibitem{Hejhal1976} D. Hejhal, \emph{ The Selberg trace formula for PSL(2, $\mathbb{R}$), Volume 1}, Lecture Notes in Mathematics 548, Springer, 1976.
\bibitem{Hejhal1983} D. Hejhal, \emph{ The Selberg trace formula for PSL(2, $\mathbb{R}$), Volume 2}, Lecture Notes in Mathematics 1001, Springer, 1983.
\bibitem{Iwaniec2002} H. Iwaniec, \emph{Spectral methods of automorphic forms}, American Mathematical Society, 2002.
\bibitem{Koyama1991} S.-Y. Koyama, \emph{Determinant expression of Selberg zeta functions (I)}, Trans. Amer. Math. Soc. \textbf{324}, 149--168, 1991.
\bibitem{Koyama1991_2} S.-Y. Koyama, \emph{Determinant expression of Selberg zeta functions (III)}, Proc. Amer. Math. Soc. \textbf{113}, 303--311, 1991.
\bibitem{MomeniVenkov2011} A. Momeni and A. Venkov, \emph{Zeta functions and regularized determinants related to the Selberg trace formula}, arXiv:1108.5659.
\bibitem{Park2009} J. Park, \emph{Analytic torsion and Ruelle zeta functions for hyperbolic manifolds with cusps}, J. Funct. Anal. \textbf{257}, 1713--1758, 2009.
\bibitem{Ruelle1976} D. Ruelle, \emph{Zeta-functions for expanding maps and Anosov flows}, Invent. Math. \textbf{34}, 231--242, 1976.
\bibitem{Sarnak1987} P. Sarnak, \emph{Determinants of Laplacians}, Comm. Math. Phys. \textbf{110}, 113--120, 1987.
\bibitem{Selberg1956} A. Selberg, \emph{Harmonic analysis and discontinuous groups in weakly symmetric Riemannian spaces with applications to Dirichlet series}, J. Indian. Math. Soc. B \textbf{20}, 47--87, 1956.
\bibitem{TakhtajanZograf1991}  L. A. Takhtajan and P. G. Zograf, \emph{A Local index theorem for families of $\bar{\partial}$-operators on punctured Riemann surfaces and a new K\"{a}hler metric on their moduli spaces}, Comm. Math. Phys. \textbf{137}, 399--426, 1991.
\bibitem{TakhtajanZograf2017}  L. A. Takhtajan and P. G. Zograf, \emph{Local index theorem for orbifold Riemann surfaces}, to appear in Lett. Math. Phys.
\bibitem{VenkovKalininFaddeev} A. B. Venkov, V. L. Kalinin and L. D. Faddeev, \emph{A nonarithmetic derivation of the Selberg trace formula}, J. Soviet Math. \textbf{8}, 177--199, 1977. Translated from Zapiski Nauchnykh Seminarov Leningradskogo OtdeleniyaMatematicheskogo
Instituta im. Vo A. Steklova AN SSSR \textbf{37}, 5--42, 1973.
\bibitem{Venkov1982} A. B. Venkov, \emph{Spectral theory of automorphic functions}, Proc. Steklov Inst. Math. \textbf{153}, 1--163, 1982.
\bibitem{Voros1987} A. Voros, \emph{Spectral functions, special functions and the Selberg zeta function}, Comm. Math. Phys. \textbf{110}, 439--465, 1987.


\end{thebibliography}
\end{document}